\newtheorem{theorem}{Theorem}[section]
\newtheorem{corollary}{Corollary}
\newtheorem{lemma}[theorem]{Lemma}
\newtheorem{proposition}{Proposition}
\theoremstyle{definition}
\newtheorem{definition}[theorem]{Definition}
\newtheorem{remark}{Remark}
\newtheorem{assumption}[theorem]{Assumption}
\newcommand{\N}{\mathbb N}
\newcommand{\Z}{\mathbb Z}
\newcommand{\R}{\mathbb R}
\newcommand{\T}{\mathbb T}
\newcommand{\Cs}{\mathcal C}
\newcommand{\Ps}{\mathcal P}
\newcommand{\diam}{\mathrm{diam}}
\newcommand{\Diff}{\mathrm{Diff}}
\newcommand{\union}{\mathop{\bigcup}}
\newcommand{\cl}{\mathrm{cl}}
\newcommand{\htop}{h_{\mathrm{top}}}
\title[Equilibrium States of Almost Anosov Diffeomorphisms] 
{Equilibrium States of Almost Anosov Diffeomorphisms}
\author[Dominic Veconi]{}
\subjclass{Primary: 58F15, 58F17; Secondary: 53C35.}
\keywords{Equilibrium states, almost Anosov diffeomorphisms, thermodynamic formalism, nonuniform hyperbolicity, smooth ergodic theory.}
\email{dkv5049@psu.edu}
\begin{document}
	\maketitle
	
	\centerline{\scshape Dominic Veconi}
	\medskip
	{\footnotesize
		\centerline{Department of Mathematics}
		\centerline{Pennsylvania State University}
		\centerline{ University Park, PA 16802, USA}
	} 
	
	
	
	\bigskip
	

	\begin{abstract}
		We develop a thermodynamic formalism for a class of diffeomorphisms of a torus that are ``almost-Anosov''. In particular, we use a Young tower construction to prove the existence and uniqueness of equilibrium states for a collection of non-H\"older continuous geometric potentials over almost Anosov systems with an indifferent fixed point, as well as prove exponential decay of correlations and the central limit theorem for these equilibrium measures. 
	\end{abstract}
	
	\section{Introduction}
	
	In \cite{Hu00}, Hu gave conditions on the existence of Sinai-Ruelle-Bowen (SRB) measures for a class of surface diffeomorphisms that are hyperbolic everywhere except at a finite set of indifferent fixed points (that is, fixed points $p$ of the map $f : M \to M$ such that $Df_p = \mathrm{id}$). Maps that are uniformly hyperbolic away from finitely many points are known as ``almost Anosov'' diffeomorphisms, and they provide a class of nonuniformly hyperbolic diffeomorphisms that are, in a sense, as close to being uniformly hyperbolic as one can ask for. With this in mind, when one would like to investigate certain properties of nonuniformly hyperbolic systems, a good starting point may be almost Anosov and almost hyperbolic maps due to the structure built into the definition of these maps. (See \cite{AL13}, for example, on ``almost axiom A diffeomorphisms''.) 
	
	Among the similarities between Anosov and almost Anosov diffeomorphisms is the fact that the tangent bundle has a splitting $TM = E^u \oplus E^s$ into stable and unstable subspaces, and except at the singularities of the almost Anosov diffeomorphism, this splitting is continuous. This was proven in \cite{Hu00} as part of the proof that almost Anosov diffeomorphisms admit SRB measures. In particular, the \emph{geometric $t$-potentials} $\phi_t(x) = -t\log\left|Df_x|_{E^u(x)}\right|$ are well-defined for almost Anosov diffeomorphisms, but may not be H\"older continuous at the indifferent fixed points. In this paper, we discuss equilibrium measures for almost Anosov diffeomorphisms, using $\phi_t$ as our potential function. 
	
	
	One of the more well-known explicit examples of a nonuniformly hyperbolic surface diffeomorphism with an indifferent fixed point is the Katok map of the torus, introduced in \cite{Kat79}. The authors of \cite{PSZ17} effected thermodynamic formalism for this map by proving the existence and uniqueness of equilibrium measures for geometric $t$-potentials. Their techniques were largely based on arguments developed in \cite{PSZ16}, which discussed thermodynamics of diffeomorhpisms admitting Young towers. It should be noted that the Katok map is not an example of an almost Anosov diffeomorphism, because the definition of almost Anosov diffeomorphisms includes strong conditions on the stable and unstable cones that are not satisfied by the Katok map (specifically condition (ii) of definition \ref{AAD-def}). However, the Young tower construction in \cite{PSZ17} for the Katok map provides a framework to effect thermodynamics for a broad class of nonuniformly hyperbolic maps with finitely many non-hyperbolic fixed points. In particular, our arguments use  the results of \cite{PSZ16} as well by showing that almost Anosov surface diffeomorphisms with an indifferent fixed point admit Young towers, and concluding that certain geometric $t$-potentials uniquely admit equilibrium measures. 
	
	The almost Anosov maps we consider are somewhat restricted. For example, an almost Anosov diffeomorphism may have non-hyperbolic fixed points that are not indifferent. If $p \in M$ is a non-hyperbolic non-indifferent fixed point, there are three possibilities for the differential $Df_p$: 
	\begin{enumerate}[label=(\arabic*)] 
		\item the differential has one eigenvalue of 1 and another eigenvalue $0 < \lambda < 1$; 
		\item the differential has one eigenvalue of 1 and another eigenvalue $\lambda > 1$;
		\item the differential is non-diagonalizable. 
	\end{enumerate}
	In \cite{HY95}, an example of case (1) was discussed and shown to not admit an SRB measure. Rather, this map admits what is referred to as an ``infinite SRB measure'', or a measure $\mu$ whose conditional measures on unstable leaves are absolutely continuous with respect to Lebesgue, $\mu(M \setminus U) < \infty$ for any neighborhood $U$ of the set of non-hyperbolic fixed points, and $\mu(M) = \infty$. In \cite{Hu00}, the author makes the heuristic observation that even for almost Anosov maps with indifferent fixed points, if the differentials exhibit stronger contraction than expansion as points approach indifferent singularities, one may expect to find this map admits an infinite SRB measure rather than an SRB probability measure. On the other hand, when an almost Anosov map with an indifferent fixed point has differentials exhibiting stronger expansion than contraction as points approach the singularity, one may expect this map to admit an SRB probability measure. Accordingly, it would be straightforward to show that almost Anosov diffeomorphisms exhibiting behavior as in (2) of the above list will admit an SRB measure, and we believe further demonstrating the existence of equilibrium states would also be fairly direct. 
	
	Additionally, the only almost Anosov systems we consider are almost Anosov diffeomorphisms of the two-dimensional torus $\T^2$. This is because our arguments rely on a topological conjugacy between an almost Anosov map $f$ and a uniformly hyperbolic Anosov diffeomorphism $\tilde f$ of the surface, and the only surface that admits Anosov diffeomorphisms is $\T^2$. It is widely believed that almost Anosov diffeomorphisms on manifolds of any dimension are topologically conjugate to Anosov diffeomorphisms, but as far as the author of this paper has observed, an explicit proof of this is absent from the literature. We propose the following amendment to this conjecture: that an almost Anosov diffeomorphisms on a manifold admitting Anosov diffeomorphisms is topologically conjugate to an Anosov diffeomorphism. This paper includes a proof of this conjecture for the two-dimensional torus, which is the first published proof of such a result that we know of.
	
	This paper is structured as follows. In Section 2, we define almost Anosov diffeomorphisms and describe some important dynamic and topological properties of these maps, including the decomposition of the tangent bundle into stable and unstable subspaces, as well as prove the conjugacy between almost Anosov maps $f : \T^2 \to \T^2$ and Anosov maps $\tilde f$ of the torus. In Section 3, we discuss certain thermodynamic constructions (including equilibrium states and topological pressure), as well as state our main results. Section 4 describes thermodynamics of Young's diffeomorphisms, establishing techniques used in \cite{PSZ16} and \cite{PSZ17}. In Section 5 we explicitly construct a Young tower for our toral almost Anosov diffeomorphism. Finally in Section 6 we use the techniques in Section 4 to prove our main result. 
	
	
	\section{Preliminaries}
	
	\begin{definition}\label{AAD-def}
		A diffeomorphism $f : M \to M$ of a Riemannian manifold $M$ is \emph{almost Anosov} if there is a continuous family of cones $x \mapsto \Cs^s_x$ and $x \mapsto \Cs^u_x$ (with $x \in M$) such that, except for a finite set $S \subset M$, 
		\begin{enumerate}[label=(\roman*)]
			\item $Df_x \Cs_x^s \supseteq \Cs_{fx}^s$ and $Df_x \Cs_x^u \subseteq \Cs_{fx}^u$; 
			\item $\left| Df_x v\right| < |v|$ for $v \in \Cs_x^s$, and $\left|Df_x v\right| > |v|$ for $v \in \Cs_x^u$.  
		\end{enumerate}
	\end{definition}
	
	\begin{remark}
		By continuity of the cone families, for any $p \in S$, we have: 
		\begin{enumerate}[label=(\roman*)]
			\item $Df_p \Cs_p^s \supseteq \Cs_{fp}^s$ and $Df_p \Cs_p^u \subseteq \Cs_{fp}^u$; 
			\item $\left| Df_p v\right| \leq |v|$ for $v \in \Cs_p^s$, and $\left|Df_p v\right| \geq|v|$ for $v \in \Cs_p^u$.  
		\end{enumerate}
		For simplicity, we assume $S$ is invariant (note if $fp \not\in S$ for some $p \in S$, then in fact condition (ii) in the definition holds for $p$, so we may consider $S \setminus \{p\}$). We further assume $fp = p$ for all $p \in S$; if not, we replace $f$ by $f^n$ for appropriate $n\in \N$. 
	\end{remark}
	
	Given a subset $A \subseteq M$ and a distance $r > 0$, define: 
	\[
	B_{r}(A) = \left\{ x \in M : d(x,A) < r\right\} 
	\]
	where $d(x,A)$ is the Riemannian distance from $x$ to the set $A$. In general, we assume $f \in \Diff^4(M)$ for reasons of technical approximations. Additionally, because we may have $\left| Df_x v\right|/|v| \to 1$ for $v \in \Cs_x^s$ or for $v \in \Cs_x^u$ as $x$ approaches $S$, we make the following definition to control the speed at which $\left| Df_x v\right|/|v| \to 1$: 
	
	\begin{definition}
		An almost Anosov diffeomorphism $f$ is \emph{nondegenerate} (up to third order) if there are constants $r_0, \kappa^s, \kappa^u > 0$ such that for all $x \not\in B_{r_0}(S)$, 
		\begin{align*}
		\left| Df_x v \right| &\geq \left( 1 + \kappa^u d(x, S)^2\right)|v| \qquad \forall v \in \Cs_x^u, \\
		\left| Df_x v \right| &\leq \left( 1 - \kappa^s d(x,S)^2\right)|v| \qquad \forall v \in \Cs_x^s.
		\end{align*}
	\end{definition}
	
	\begin{remark}
		In general, for an almost Anosov diffeomorphism $f : M \to M$, for any $r > 0$, there are constants $K^s = K^s(r)$ and $K^u = K^u(r)$ so that, for $x \not\in B_r(S)$, 
		\begin{align*}
		|Df_x v| &\geq K^u|v| \quad \forall v \in \Cs^u_x, \qquad \textrm{and} \qquad |Df_x v| \leq K^s|v| \quad \forall v \in \Cs^s_x.
		\end{align*}
	\end{remark}
	
	Define the \emph{local stable and unstable manifolds} at the point $x \in M$: 
	\begin{align*}
	W_\epsilon^u(x) &= \left\{ y \in M : d\left(f^{-n}y, f^{-n}x\right) \leq \epsilon \quad \forall n \geq 0 \right\}, \\
	W_\epsilon^s(x) &= \left\{ y \in M : d\left(f^{n}y, f^{n}x\right) \leq \epsilon \quad \forall n \geq 0 \right\}.
	\end{align*}
	
	\begin{theorem}\cite{Hu00}\label{bundle-split}
		There exists an invariant decomposition of the tangent bundle into $TM = E^u \oplus E^s$ such that for every $x \in M$: 
		\begin{itemize}
			\item $E^\eta_x \subseteq \Cs^\eta_x$ for $\eta = s, u$;
			\item $Df_x E^\eta_x = E^\eta(fx)$ for $\eta = s, u$; 
			\item $W_\epsilon^\eta(x)$ is a $C^1$ curve, which is tangent to $E^\eta(x)$ for $\eta = s, u$. 
		\end{itemize}
		Furthermore, the decomposition $TM = E^u \oplus E^s$ is continuous everywhere except possibly on $S$. 
	\end{theorem}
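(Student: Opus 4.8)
The plan is to realise $E^u$ and $E^s$ as nested intersections of iterated cones, to extract one-dimensionality from condition (ii) of Definition \ref{AAD-def} together with the nondegeneracy estimate, and then to obtain invariance, continuity off $S$, and the local manifolds by the standard hyperbolic-theory arguments. Since $f^{-1}$ is again almost Anosov with the two cone families interchanged --- condition (i) for $f$ rearranges to the analogous inclusion for $f^{-1}$, condition (ii) becomes expansion on $\Cs^s$ and contraction on $\Cs^u$, and the nondegeneracy bounds transfer --- it suffices to treat $E^u$; then ``$E^s$ for $f$'' is ``$E^u$ for $f^{-1}$''. So first I would set $E^u_x := \bigcap_{n\ge0}Df^n_{f^{-n}x}\,\Cs^u_{f^{-n}x}$, replacing each $\Cs^\eta_x$ by its closure if necessary. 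Applying condition (i) at $f^{-(n+1)}x$ shows $Df^{n+1}_{f^{-(n+1)}x}\Cs^u_{f^{-(n+1)}x}\subseteq Df^n_{f^{-n}x}\Cs^u_{f^{-n}x}$, so $E^u_x$ is an intersection of a nested family of nonempty closed cones, hence a nonempty closed subcone of $\Cs^u_x$; the chain rule together with (i) gives $Df_x E^u_x=E^u_{fx}$. Symmetrically $E^s_x:=\bigcap_{n\ge0}Df^{-n}_{f^nx}\,\Cs^s_{f^nx}\subseteq\Cs^s_x$ is $Df$-invariant, and since the two cone families are transverse and complementary, $E^u_x\cap E^s_x=\{0\}$; once each summand is shown to have the correct dimension (a line when $\dim M=2$) this yields the splitting $TM=E^u\oplus E^s$.

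The crux --- and where I expect the real work --- is to show $E^u_x$ is one-dimensional. Let $\delta_n(x)$ denote the angular diameter of $Df^n_{f^{-n}x}\Cs^u_{f^{-n}x}$. In a chart adapted to the cone at $y$, writing $\lambda^u_y\ge1$ and $\lambda^s_y\le1$ for the expansion of $Df_y$ in the unstable and stable directions, one sees $\delta_{n+1}(x)\le\delta_n(x)\cdot\lambda^s_{f^{-(n+1)}x}/\lambda^u_{f^{-(n+1)}x}$, and by condition (ii) together with the nondegeneracy bounds the ratio $\lambda^u_y/\lambda^s_y$ is at least $1+c\,d(y,S)^2$ for a fixed $c>0$, while by (ii) and compactness of $M\setminus B_{r_0}(S)$ it exceeds a fixed $\theta>1$ once $y\notin B_{r_0}(S)$. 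Now dichotomize the backward orbit of $x\notin S$. If $\{f^{-n}x\}$ meets $M\setminus B_{r_0}(S)$ for infinitely many $n$, then infinitely many factors are $\le1/\theta<1$ and $\delta_n(x)\to0$. Otherwise the backward orbit is eventually contained in $B_{r_0}(S)$; shrinking $r_0$ so the balls around distinct points of $S$ are disjoint, it is eventually trapped in a single $B_{r_0}(p)$, whose only $f$-invariant compact subset is $\{p\}$, so the $\alpha$-limit set of $x$ is $\{p\}$, $f^{-n}x\to p$, and $x\in W^u_\epsilon(p)$. Here the nondegeneracy is decisive: ``nondegenerate up to third order'' means the leading cubic terms of $f$ at $p$ govern the local dynamics, so --- exactly as for the Manneville--Pomeau map $t\mapsto t+t^3$ --- one gets $d(f^{-n}x,S)\asymp n^{-1/2}$, whence $\sum_n d(f^{-n}x,S)^2\asymp\sum_n n^{-1}=\infty$, $\prod_n\bigl(1+c\,d(f^{-n}x,S)^2\bigr)=\infty$, and again $\delta_n(x)\to0$. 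Thus $E^u_x$ is a single line.

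Continuity of $x\mapsto E^u_x$ on $M\setminus S$ then follows by a uniform-limit argument: each map $x\mapsto Df^n_{f^{-n}x}\Cs^u_{f^{-n}x}$ is continuous because $f\in\Diff^4$ and the cone families are continuous, and the contraction estimate above is locally uniform on $M\setminus S$, so $\delta_n\to0$ locally uniformly there and $E^u$ inherits continuity. That estimate, and with it the conclusion, genuinely break down on $S$, where $Df_p=\mathrm{id}$ distinguishes no line and $E^u$ need not extend continuously.

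Finally, the local manifolds $W^\eta_\epsilon(x)$ come from the Hadamard--Perron graph transform performed in charts along the orbit of $x$: $\Cs^u$-admissible curves are preserved by condition (i), condition (ii) makes the appropriate (backward) graph transform a contraction on admissible $C^1$ sections, its fixed point is a $C^1$ curve through $x$ tangent to the limiting line $E^u_x$, and the dynamical description of $W^u_\epsilon(x)$ identifies it with this curve; $W^s_\epsilon$ is obtained by running the same construction for $f^{-1}$. The recurring --- and principal --- obstacle throughout is maintaining uniformity of all these contractions for orbits that limit on $S$, and controlling precisely this is the reason the nondegeneracy hypothesis is imposed.
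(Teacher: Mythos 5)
The paper itself offers no proof of this theorem --- it is imported from Hu \cite{Hu00} with only the subsequent remark that the $s$-half follows by passing to $f^{-1}$, a device your proposal also uses. Your overall route (nested intersection of pushed-forward cones, a dichotomy on the backward orbit using nondegeneracy to drive the angular diameter to zero, and a graph transform for the local curves) is the same cone-field strategy Hu uses, and the Manneville--Pomeau heuristic for the escape rate from $S$ is exactly the right use of the third-order nondegeneracy.

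There is, however, a genuine gap at the points of $S$, and the theorem does assert the splitting for \emph{every} $x\in M$, including $p\in S$. For such $p$, $f^{-n}p = p$ and $Df_p=\mathrm{id}$, so every set in the nested intersection $\bigcap_{n\ge 0} Df^n_{f^{-n}p}\Cs^u_{f^{-n}p}$ equals the full cone $\Cs^u_p$, your angular-diameter estimate is vacuous ($\lambda^s_p=\lambda^u_p=1$), and the construction singles out no line at all. You note that ``the estimate genuinely breaks down on $S$'' but treat this only as threatening continuity; in fact it means your construction does not produce $E^u_p$ or $E^s_p$. One must define these separately --- for instance as the axes distinguished by the normal form (\ref{0-coords}) at $p$ (the $x$-axis for $E^u_p$, the $y$-axis for $E^s_p$), equivalently as the tangents at $p$ to the local unstable and stable curves supplied by the graph transform --- and observe the (trivial, since $Df_p=\mathrm{id}$) invariance and cone-membership for that choice. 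A secondary point: the rate $d(f^{-n}x,S)\asymp n^{-1/2}$ in the trapped-orbit case is asserted by analogy rather than derived; since that whole case hinges on $\sum_n d(f^{-n}x,S)^2=\infty$, it is worth a sentence deriving it from (\ref{0-coords}), e.g.\ by checking that the backward orbit is asymptotic to the local curve $\{y=0\}$, on which $f^{-1}(x,0)=\big(x(1-ax^2+O(x^3)),\,0\big)$, and then running the one-dimensional computation.
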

	
	\begin{remark}
		The proof of this theorem in \cite{Hu00} gives tangency of $W^u_\epsilon(x)$ to $E^u(x)$. The author notes that $W^s_\epsilon(x)$ is tangent to $E^s_x$, and the same argument can be used to prove this as was used to prove the fact for $W^u_\epsilon(x)$ and $E^u_x$ by considering $f^{-1}$ instead of $f$. 
	\end{remark}
	
	\begin{remark}\label{localproduct}
		Since $E^\eta_x \subseteq \Cs^\eta_x$ for $x \in M$ and $\eta = s, u$, despite the fact that the decomposition $TM = E^u \oplus E^s$ is possibly discontinuous at points in $S$, $f$ has local product structure. Specifically, there are constants $\epsilon_0$, $\delta_0 > 0$ such that for every $x, y \in M$ with $d(x,y) < \delta_0$, the intersection $W^u_{\epsilon_0}(x) \cap W^s_{\epsilon_0}(y)$ contains exactly one point, which we denote $[x,y]$. 
	\end{remark}
	
	From here, we assume $M = \T^2$, $f : \T^2 \to \T^2$ is almost Anosov with singular set $S = \{0\}$, and that $Df_0 = \mathrm{Id}$. It is shown in \cite{Hu00} that nondegeneracy of $f$ implies $D^2 f_0 = 0$, so there is a coordinate system around 0 for which $f$ is expressible as 
	\begin{equation}\label{0-coords}
	f(x,y) = \bigg(\begin{array}{cc} x\big(1+\phi(x,y)\big), & y\big(1-\psi(x,y)\big)\end{array}\bigg),
	\end{equation}
	for $(x,y) \in \R^2$ and 
	\begin{align*}
	\phi(x,y) &= a_0 x^2 + a_1 xy + a_2 y^2 + O\left( |(x,y)|^3\right), \\
	\psi(x,y) &= b_0 x^2 + b_1 xy + b_2 y^2 + O\left( |(x,y)|^3\right),
	\end{align*}
	where $|(x,y)| := \sqrt{x^2 + y^2}$ for $x,y \in \R$. It is further shown in \cite{Hu00} that when there is $\alpha \in (0,1)$ so that $\alpha a_2 > 2b_2$, $a_1 = b_1 = 0$, and $a_0 b_2 - a_2 b_0 > 0$, the map $f$ admits an SRB measure. Henceforth, we shall assume \begin{equation}\label{simple-phi-psi}
	\phi(x,y) = ax^2 + by^2 \quad \textrm{and} \quad \psi(x,y) = cx^2 + dy^2 
	\end{equation}
	for some $a,b,c,d \in (0,\infty)$. 
	
	We begin by showing that almost Anosov diffeomorphisms on the torus are topologically conjugate to Anosov diffeomorphisms. In particular, this allows us to construct Markov partitions for almost Anosov diffeomorphisms of arbitrarily small diameter. Our proof requires the following result: 
	
	\begin{theorem}\cite{Gu75}
		Suppose an expansive map $f : M \to M$ of a Riemannian manifold is a $C^0$ limit of Anosov diffeomorphisms. Then $f$ is topologically conjugate to an Anosov diffeomorphism. 
	\end{theorem}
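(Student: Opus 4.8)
The plan is to run the classical argument combining the \emph{shadowing} (pseudo-orbit tracing) property of Anosov diffeomorphisms with the \emph{expansiveness} of $f$: shadowing produces a continuous map relating $f$ to one of the nearby Anosov diffeomorphisms, and expansiveness of $f$ forces that map to be injective, hence a topological conjugacy.

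Concretely, I would first fix $\epsilon>0$ smaller than half the expansiveness constant of $f$, and then, using that $f$ is a $C^0$-limit of Anosov diffeomorphisms, choose one such diffeomorphism $g$ with $\epsilon$ also below half the expansiveness constant of $g$ and with $C^0$-distance $d_{C^0}(f,g)<\delta$, where $\delta=\delta(g,\epsilon)>0$ is a shadowing constant for $g$ at scale $\epsilon$. Then for each $x\in M$ the orbit $(f^kx)_{k\in\Z}$ satisfies $d\big(g(f^kx),f^{k+1}x\big)=d\big(g(f^kx),f(f^kx)\big)<\delta$, so it is a $\delta$-pseudo-orbit of $g$; hence it is $\epsilon$-shadowed by a $g$-orbit, unique since $\epsilon$ lies below the expansiveness constant of $g$. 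Setting $h(x)$ equal to the initial point of that orbit gives $d\big(g^kh(x),f^kx\big)\le\epsilon$ for all $k\in\Z$. Uniqueness of the shadowing orbit forces $h\circ f=g\circ h$, and a compactness argument (any subsequential limit of $h(x_j)$ again $\epsilon$-shadows the $f$-orbit of $\lim x_j$, hence equals its $h$-image) gives continuity of $h$. Since $d(h(x),x)\le\epsilon$ with $\epsilon$ small, $h$ is homotopic to $\mathrm{id}_M$, so it has degree $1$ and is onto; and if $h(x)=h(x')$, then one $g$-orbit $\epsilon$-shadows the $f$-orbits of both $x$ and $x'$, so $d(f^kx,f^kx')\le2\epsilon$ for all $k\in\Z$, which is below the expansiveness constant of $f$, whence $x=x'$. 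Thus $h$ is a continuous bijection of the compact space $M$, hence a homeomorphism, conjugating $f$ to the Anosov diffeomorphism $g$.

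The step I expect to be the main obstacle is the choice of approximant in the previous paragraph: one must produce a $g$ in the approximating family that is simultaneously $C^0$-close to $f$ \emph{and} has shadowing constant $\delta(g,\epsilon)$ exceeding $d_{C^0}(f,g)$, whereas $C^0$-convergence alone gives no control on the hyperbolicity constants --- and hence the shadowing constants --- of the approximating diffeomorphisms. Handling this requires some uniformity input (for instance, that the approximating sequence is $C^0$-Cauchy and eventually lies in a single topological-conjugacy class, so that $g$ may be fixed before shrinking $\epsilon$; in the situation of this paper the approximants will be explicit, controlled perturbations, for which the needed uniformity is transparent). Alternatively, in the case $M=\T^2$ relevant here one may argue homologically: for large $n$ the approximants are homotopic to $f$, hence all induce the same automorphism $A$ of $H_1(\T^2)\cong\Z^2$, which is hyperbolic as the linearization of an Anosov map; then $f$ is homotopic to the linear Anosov diffeomorphism $A$, Franks's theorem yields a continuous surjection $h$ homotopic to $\mathrm{id}$ with $h\circ f=A\circ h$, and a careful use of the expansiveness of $f$ promotes $h$ to a conjugacy, realizing $f$ as topologically conjugate to the Anosov map $A$.
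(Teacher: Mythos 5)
The paper does not actually prove this statement: it is quoted from Gura's paper \cite{Gu75} and used as a black box, so there is no internal proof to compare against. Judged on its own terms, your proposal has a genuine gap, and it is the one you flag yourself. In the shadowing route, the constant $\delta(g,\epsilon)$ is governed by the hyperbolicity constants of $g$, and for a sequence of Anosov approximants converging in $C^0$ to a non-Anosov map these constants necessarily degenerate (in the present paper the approximants $H_\epsilon$ have expansion rates tending to $1$ near the origin), so there is no a priori way to choose $g$ with $d_{C^0}(f,g)<\delta(g,\epsilon)$. Your first patch does not repair this: fixing $g=g_{n_0}$ in advance leaves $d_{C^0}(f,g)$ a fixed positive number with no relation to $\delta(g,\epsilon)$, and the $g_n$ lying in a single conjugacy class gives no uniform shadowing constant, since shadowing constants are not conjugacy invariants. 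So the first two paragraphs do not constitute a proof of the statement as written.

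The second patch (the homological/Franks route on $\T^2$) is the right direction, and is presumably close to Gura's actual argument, but the step you defer --- ``a careful use of the expansiveness of $f$ promotes $h$ to a conjugacy'' --- is precisely the nontrivial content. Franks's semiconjugacy satisfies $\sup|\tilde h-\mathrm{id}|\le C$ in the universal cover, where $C$ is determined by $\sup|\tilde f-A|$ and the hyperbolicity of $A$; this $C$ has no reason to be smaller than half the expansiveness constant of $f$. Hence the injectivity argument ``$h(x)=h(x')$ implies $d(f^kx,f^kx')\le 2C$ for all $k$, hence $x=x'$'' does not close at this scale, and one needs an additional idea (e.g., connectedness and shrinking of the fibers $h^{-1}(z)$, or a uniform-expansiveness argument at scale $2C$) that is not supplied. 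Note also that the statement in the paper is phrased for a general Riemannian manifold, whereas the homological argument is specific to $\T^2$; that restriction is harmless for the application here, but it means your second route proves a narrower statement than the one asserted.
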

	
	We make the following technical assumption to help us prove that Anosov maps are topologically conjugate to almost Anosov maps, as well as to show exponential decay of correlations and the central limit theorem. (Although this assumption is necessary for decay of correlations and CLT, we believe that one could prove topological conjugacy under a weaker assumption). 
	
	\begin{assumption}\label{A} There are constants $r_0$ and $r_1$, with  $0 < r_0 < r_1$ for which the almost Anosov map $f : \T^2 \to \T^2$ is equal to a linear Anosov map $\tilde f : \T^2 \to \T^2$ outside of $B_{r_1}(0)$, and within $B_{r_0}(0)$, $f$ has the form (\ref{0-coords}). 
	\end{assumption}
	
	Maps satisfying this assumption do exist. To construct such a diffeomorphism, one can choose a hyperbolic matrix $A \in \mathrm{SL}(2,\Z)$ and a smooth bump function $\omega : \R^2 \to \R$ supported on $B_{r_1}(0)$ and equal to 1 on $B_{r_0}(0)$. Define the map $\tilde \Phi : \R^2 \to \R^2$ by 
	\[
	\tilde\Phi(x,y) = \bigg( \begin{array}{cc} x\big(1+\phi(x,y)\big), & y \big(1-\psi(x,y)\big) \end{array}\bigg) - A(x,y),
	\]
	where $\phi$ and $\psi$ are as in (\ref{simple-phi-psi}), and let $\Phi : \T^2 \to \T^2$ be the quotient of the map $\omega\tilde \Phi$ by $\Z^2$. The map $f = \Phi + A$ is an example of an almost Anosov diffeomorphism satisfying Assumption \ref{A}. 
	
	\begin{theorem}\label{Anosov-conjugacy}
		A nondegenerate almost Anosov diffeomorphism $f : \T^2 \to \T^2$ satisfying the above assumption is topologically conjugate to an Anosov diffeomorphism. 
	\end{theorem}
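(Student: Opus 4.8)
The plan is to verify the two hypotheses of Guckenheimer's theorem quoted above for $f$: that $f$ is expansive, and that $f$ is a $C^0$-limit of Anosov diffeomorphisms of $\T^2$. Granting these, Guckenheimer's theorem immediately produces a topological conjugacy between $f$ and an Anosov diffeomorphism, which is the assertion.

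\emph{Expansiveness} follows quickly from the local product structure. Let $\delta = \min\{\epsilon_0,\delta_0\}$, with $\epsilon_0,\delta_0$ as in Remark \ref{localproduct}, and suppose $x,y\in\T^2$ satisfy $d(f^n x, f^n y)\le\delta$ for every $n\in\Z$. Looking at $n\ge 0$ gives $y\in W^s_{\epsilon_0}(x)$, and looking at $n\le 0$ gives $y\in W^u_{\epsilon_0}(x)$; since also $d(x,y)\le\delta_0$, Remark \ref{localproduct} says that $W^u_{\epsilon_0}(x)\cap W^s_{\epsilon_0}(x)$ consists of a single point, which must be $x$ itself. Hence $y=x$, so $f$ is expansive with expansivity constant $\delta$.

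\emph{Approximation by Anosov maps.} Here Assumption \ref{A} and the construction following it are exploited: writing $f=\omega\tilde\Phi + A$, for small $\ep>0$ set $\tilde\Phi_\ep(x,y) = \bigl(x(1+\phi(x,y)+\ep^2),\ y(1-\psi(x,y)-\ep^2)\bigr)-A(x,y)$ and $f_\ep = \omega\tilde\Phi_\ep + A$. Then $f_\ep$ still equals the linear Anosov map $\tilde f=A$ off $B_{r_1}(0)$, has the normal form $\bigl(x(1+\phi+\ep^2),\ y(1-\psi-\ep^2)\bigr)$ on $B_{r_0}(0)$, and $f_\ep-f$ together with its first derivatives is $O(\ep^2)$ uniformly, so $f_\ep\to f$ in $C^1$ (hence $C^0$) and each $f_\ep$ is a diffeomorphism of $\T^2$ for $\ep$ small. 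It remains to show each $f_\ep$ is Anosov; combined with expansiveness this is exactly what Guckenheimer's theorem needs.

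\emph{Why $f_\ep$ is Anosov --- the main obstacle.} One checks the cone criterion for $f_\ep$, using the cone families $\Cs^s,\Cs^u$ of $f$ (near $0$, cones of small enough slope about the stable/unstable coordinate axes). On the compact set $\{d(\cdot,0)\ge r_0/2\}$, which misses $S=\{0\}$, the strict inequalities of condition (ii) of Definition \ref{AAD-def} hold for $f$ with a gap that is uniform by continuity and compactness, so the $C^1$-$O(\ep^2)$-close map $f_\ep$ is uniformly hyperbolic there for $\ep$ small. On $B_{r_0}(0)$ the added term $+\ep^2$ turns the indifferent fixed point into a hyperbolic saddle, $Df_{\ep,0}=\mathrm{diag}(1+\ep^2,1-\ep^2)$, and since $+\ep^2$ only reinforces expansion in the $x$-direction and contraction in the $y$-direction while the remaining part of $Df_{\ep,(x,y)}$ is the $O(|(x,y)|^2)$ matrix built from $\phi,\psi$, the cones $\Cs^u,\Cs^s$ remain $Df_\ep$-invariant and (writing $v=(v_1,v_2)$ in the chart) $|(Df_\ep v)_1|\ge(1+\ep^2)|v_1|$ on $\Cs^u$, with the dual estimate on $\Cs^s$. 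The genuinely delicate point is to promote these coordinatewise estimates to a single \emph{uniform} expansion/contraction constant for $f_\ep$ on all of $\T^2$: an orbit may linger near the weakly expanding fixed point $0$ --- in the extreme, sit on the local stable manifold of $0$ forever --- so the per-step factor in the ambient metric need not exceed $1$. This is resolved by fixing a Riemannian metric adapted to the saddle $0$ on a neighborhood of $0$, patching it to the ambient metric away from that neighborhood, and passing to a high enough iterate of $f_\ep$ that the uniform expansion accumulated between the bounded number of metric-interface crossings dominates; with the invariant cone fields this yields the uniform hyperbolicity, so $f_\ep$ is Anosov. Since $f=\lim_{\ep\to0}f_\ep$ in $C^0$ and $f$ is expansive, Guckenheimer's theorem finishes the proof.
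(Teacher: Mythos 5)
Your proof is correct and follows the same overall strategy as the paper (Gura's theorem, expansiveness, plus approximation by Anosov maps), but the approximation step is carried out rather differently, so the comparison is worth recording. The paper perturbs $\phi,\psi$ by radial functions $g_\epsilon(x^2+y^2), h_\epsilon(x^2+y^2)$ that vanish outside $B_\epsilon(0)$ and are \emph{manufactured from the hyperbolicity slack}: from the observation that each $Df(x,y)$ near $0$ is hyperbolic, openness of hyperbolicity supplies continuous functions $\pi,\rho$ measuring how much one can shave off the diagonal entries while preserving hyperbolicity, and $g_\epsilon,h_\epsilon$ are built by integrating the radial infima $\alpha,\beta$ of $\pi,\rho$ so that $DH_\epsilon(x,y)$ is trapped, entrywise, between $Df(x,y)$ and the extremal hyperbolic matrix in (\ref{badDH}); pointwise hyperbolicity of $DH_\epsilon$ then follows by a sandwiching argument, and the Anosov property is concluded (implicitly, via compactness and the preserved cone structure). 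You instead add a \emph{constant} $\epsilon^2$ to $\phi$ and $\psi$ and cut it off by $\omega$, which is arguably the more transparent perturbation, and you verify the cone criterion directly. Your approach has the advantage of not needing the auxiliary functions $\pi,\rho,\alpha,\beta,g_\epsilon,h_\epsilon$; the paper's has the advantage that the perturbation is supported in an $\epsilon$-ball and the pointwise hyperbolicity check is essentially algebraic.

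One remark on your final paragraph: the obstruction you flag (an orbit lingering near the now-hyperbolic saddle, so that the per-step factor in the ambient metric may dip below $1$) is not actually an obstruction given the very estimates you establish. If the cone $\Cs^u$ has a fixed opening $|v_2|\le\alpha|v_1|$ with $\alpha<1$, then $|v_1|\le|v|\le\sqrt{1+\alpha^2}\,|v_1|$, so the coordinatewise bound $|(Df_\epsilon^n v)_1|\ge(1+\epsilon^2)^n|v_1|$ immediately yields $|Df_\epsilon^n v|\ge (1+\alpha^2)^{-1/2}(1+\epsilon^2)^n|v|$, which is uniform hyperbolicity with constants $C=(1+\alpha^2)^{-1/2}$ and $\lambda=1+\epsilon^2$ (the definition of Anosov permits $C<1$, so per-step expansion in the ambient norm is not required). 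The machinery you invoke --- a locally adapted metric near $0$, patching, and passing to a high iterate while bounding interface crossings --- is therefore unnecessary, and as stated it is the vaguest part of the argument (it is not clear why the number of crossings is bounded, nor how the iterate is chosen). Replacing that paragraph with the one-line norm comparison above would make the proposal both shorter and tighter. One further small inaccuracy: Assumption \ref{A} does not posit $f=\omega\tilde\Phi+A$ (that is only the example construction following the assumption); what you want is to define $f_\epsilon = f + \epsilon^2\,\omega\cdot(x,-y)$ directly, which achieves the same effect and is licensed by the assumption alone.
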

	
	\begin{proof}
		We need to show that $f$ is both expansive and is the limit of a sequence of Anosov diffeomorphisms. The argument for expansiveness is standard: suppose for every $\epsilon > 0$ we have distinct $p,q \in \T^2$ so that $d\left(f^n p, f^n q \right) < \epsilon$ for every $n \in \Z$. Then condition (i) of definition \ref{AAD-def} implies $q \in W^u_\epsilon(p) \cap W^s_\epsilon(p)$ (see remark \ref{localproduct}). If we assume $\epsilon < \min\{\delta_0, \epsilon_0\}$ (where $\epsilon_0, \delta_0$ are as in remark \ref{localproduct}), this gives us $W^\eta_{\epsilon_0}(p) \supseteq W^\eta_\epsilon(p)$ for $\eta = s, u$. In particular, 
		\[
		q \in W^u_\epsilon(p) \cap W^s_\epsilon(p) \subseteq W^u_{\epsilon_0}(p) \cap W^s_\epsilon(p) = \{p\}.
		\]
		Therefore $p=q$, contradiction. 
		
		To prove that $f$ is a limit of a sequence of Anosov diffeomorphisms, we define a homotopy $H : \T^2 \times [0,1] \to \T^2$ so that $H_0 = f$, and $H_\epsilon$ is Anosov for every $\epsilon \in (0,1]$. For $r_0 > 0$ small, the disc $B_{r_0}$ is a coordinate chart in which $f$ has the form 
		\[
		f(x,y) = \bigg(\begin{array}{cc} x\big(1+\phi(x,y)\big), & y\big(1-\psi(x,y)\big)\end{array}\bigg),
		\]
		for $(x,y) \in \R^2$ and 
		\begin{align*}
		\phi(x,y) &= ax^2 + by^2+ O\left( |(x,y)|^3\right), \\
		\psi(x,y) &= cx^2 + dy^2 + O\left( |(x,y)|^3\right).
		\end{align*}
		The differentials in $B_{r_0}$ are of the form
		\[
		Df(x,y) = \left( \begin{array}{cc}
		1 + 3ax^2 + by^2 & 2bxy \\
		-2cxy & 1 - cx^2 - 3dy^2
		\end{array} \right) + \mathcal{O}
		\]
		where $\mathcal{O}$ is a matrix of terms of order $|(x,y)|^3$. By condition (ii) in definition \ref{AAD-def}, these matrices are hyperbolic; that is, they have two real eigenvalues, one whose magnitude is greater than 1 and one whose magnitude is less than 1. After taking the radius $r_0$ to be sufficiently small, the terms of $\mathcal{O}$ are very close to 0, so we may assume $\mathcal{O} = 0$. Moreover, since hyperbolicity is an open condition on matrices, for each $(x,y) \in B_{r_0}\setminus\{0\}$, there are $\pi(x,y) > 0$ and $\rho(x,y) > 0$ for which the matrix 
		\begin{equation}\label{badDH}
		\left( \begin{array}{cc}
		1 + \left( 3a - \alpha\right) x^2 + by^2 & 2bxy \\
		-2cxy & 1 - cx^2 - (3d - \beta)y^2
		\end{array} \right)
		\end{equation}
		is hyperbolic for $0 \leq \alpha \leq \pi(x,y)$ and $0 \leq \beta \leq \rho(x,y)$. We may assume these functions $\pi, \rho : B_{r_0} \to \R$ are continuous and that $\pi(0,0) = \rho(0,0) = 0$. Define the nonnegative functions $\alpha, \beta : [0,1] \to \R$ by
		\[
		\alpha(s) = \inf_{x^2 + y^2 = s^2} \pi(x,y) \quad \textrm{and} \quad \beta(s) = \inf_{x^2 + y^2 = s^2} \rho(x,y).
		\] 
		Further define the family of nonnegative continuous maps $g_\epsilon, h_\epsilon : [0,1] \to \R$ for each $\epsilon \in [0,r_0]$ so that $g_\epsilon(t) = h_\epsilon(t) = 0$ for $t \geq \epsilon^2$, and for $t < \epsilon^2$ we let:
		\[
		g_\epsilon(t) = \frac 1 4 \int_t^{\epsilon^2} \alpha\left(\sqrt u\right) \, du \quad \textrm{and} \quad h_\epsilon(t) = \frac 1 4 \int_t^{\epsilon^2} \beta\left(\sqrt u \right) \, du.
		\]
		Observe that for $s < \epsilon$, we have:
		\begin{equation}\label{gh-approx}
		-\frac 1 2 \alpha(s) < g'_{\epsilon}\left(s^2\right) < 0 \quad \textrm{and} \quad -\frac 1 2 \beta(s) < h'_{\epsilon}\left(s^2\right)< 0,
		\end{equation}
		which further implies that $g_\epsilon \to 0$ and $h_\epsilon \to 0$ in $C^0$ as $\epsilon \to 0$. Let $H_\epsilon : \T^2 \to \T^2$ be maps for each $\epsilon > 0$ so that in the coordinate ball $B_{r_0}$, $H_\epsilon$ is of the form
		\begin{align*}
		H_\epsilon(x,y) &= \bigg( x\left(1+g_\epsilon\left(x^2 + y^2\right) + ax^2 + by^2 \right), \\
		&\qquad\qquad y \left( 1 - h_\epsilon \left(x^2 + y^2 \right) - cx^2 - dy^2 \right) \bigg),
		\end{align*}
		and we further assume that outside of $B_{r_1}$, the map $H_\epsilon =  F$ is linear Anosov for all $\epsilon$, and in the annulus $B_{r_1} \setminus B_{r_0}$, $H_\epsilon$ smooths out to $F$. We see that $f$ is the $C^0$ limit of $H_\epsilon$ as $\epsilon \to 0$, so we only need to show each $H_\epsilon$ is Anosov for $\epsilon \in (0, \epsilon_0)$ for some small $\epsilon_0$. To that end, the derivative of $H_\epsilon : \T^2 \to \T^2$ for a fixed $\epsilon$ is 
		\begin{equation}\label{DH}
		DH_\epsilon(x,y) = \left( \begin{array}{cc}
		1 + \Phi_\epsilon(x,y) & 2xy\left(g_\epsilon'\left(x^2 + y^2 \right) + b \right) \\ 
		-2xy\left(h_\epsilon' \left(x^2 + y^2 \right) + c \right) & 1 - \Psi_\epsilon(x,y)
		\end{array}\right) 
		\end{equation}
		in the neighborhood $B_{r_0}$ of the origin, where
		\begin{align*}
		\Phi_\epsilon(x,y) &= g_\epsilon\left(x^2 + y^2 \right) + \big(3a + 2 g'_\epsilon\left(x^2 + y^2 \right) \big) x^2 + by^2, \\
		\Psi_\epsilon(x,y) &= h_\epsilon\left(x^2 + y^2 \right) + \big( 3d + 2 h_\epsilon'\left(x^2 + y^2 \right)\big) y^2 + cx^2 .
		\end{align*}
		Our objective is to show that these linear maps are hyperbolic for every $(x,y) \in \T^2$. Since $H_\epsilon(x,y) = F$ for $(x,y) \not\in B_\epsilon$, and we know $DF$ is hyperbolic everywhere, we only need to check the case when $x^2 + y^2 < \epsilon^2$. 
		
		Hyperbolicity may fail in two ways: if $\Phi_\epsilon(x,y)$ or $\Psi_\epsilon(x,y)$ are too small at some point $(x,y)$, or if the upper right and lower left terms of (\ref{DH}) are too far from 0. We first address the latter concern. Assuming $|(x,y)|$ is small, since $\alpha\left(x^2 + y^2 \right) \to 0$ and $\beta \left(x^2 + y^2 \right)\to 0$ as $|(x,y)| \to 0$, equation (\ref{gh-approx}) gives us $ g'_\epsilon\left(x^2 + y^2 \right) > -b$ and $h'_\epsilon(\left(x^2 + y^2\right) > -c$. In particular, the furthest either the upper right or lower left entries of (\ref{DH}) can be from 0 are $2bxy$ and $-2cxy$ for any $x,y$. 
		
		To address the first concern, i.e. ensuring $\Phi_\epsilon(x,y)$ and $\Psi_\epsilon(x,y)$ are not too small, we observe:
		\begin{align*}
		\Phi_\epsilon(x,y) &\geq g_\epsilon \left(x^2 + y^2 \right) + \big(3a - \alpha\left(|(x,y)|\right)\big) x^2 + by^2 > \big( 3a - \pi(x,y)\big) x^2 + by^2,
		\end{align*}
		and 
		\begin{align*}
		\Psi_\epsilon(x,y) &\geq h_\epsilon \left(x^2 + y^2 \right) + \big(3d - \beta(|(x,y)|) \big) y^2 + cx^2 > \big( 3d - \rho(x,y) \big) y^2 + cx^2.
		\end{align*}
		Therefore, the furthest each linear map $DH_\epsilon(x,y)$ could possibly be from being hyperbolic would be if $DH_\epsilon(x,y)$ were of the form 
		\[
		\left( \begin{array}{cc}
		1 + \big( 3a - \pi(x,y)\big) x^2 + by^2 & 2bxy \\ -2cxy & 1 - \big( 3d - \rho(x,y) \big) y^2 - cx^2
		\end{array}\right) 
		\]
		and as we saw in (\ref{badDH}), this matrix is still hyperbolic. 
	\end{proof}
	
	\begin{corollary}
		Almost Anosov diffeomorphisms admit Markov partitions of arbitrarily small diameter. 
	\end{corollary}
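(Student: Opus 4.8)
The plan is to transport a Markov partition of the conjugate Anosov diffeomorphism back to $f$ via the topological conjugacy furnished by Theorem \ref{Anosov-conjugacy}. Let $h : \T^2 \to \T^2$ be a homeomorphism with $h \circ \tilde f = f \circ h$, where $\tilde f$ is Anosov. By the classical work of Sinai and Bowen, $\tilde f$ admits Markov partitions $\mathcal{R} = \{R_1, \dots, R_k\}$ of arbitrarily small diameter; recall each $R_i$ is a rectangle, i.e. $R_i = \cl(\mathrm{int}\, R_i)$ with $[x,y] \in R_i$ whenever $x,y \in R_i$ (where $[\cdot,\cdot]$ is the bracket of the local product structure of $\tilde f$), the $R_i$ have pairwise disjoint interiors and cover $\T^2$, and the overlap conditions $\tilde f(W^u(x,R_i)) \supseteq W^u(\tilde f x, R_j)$ and $\tilde f(W^s(x,R_i)) \subseteq W^s(\tilde f x, R_j)$ hold whenever $\mathrm{int}\,R_i \cap \tilde f^{-1}\mathrm{int}\,R_j \neq \emptyset$. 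I then set $R_i' := h(R_i)$ and show $\{R_i'\}$ is a Markov partition for $f$ of arbitrarily small diameter.

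The crux is to see that $h$ carries the local product structure of $\tilde f$ onto the (a priori unrelated, and for $f$ possibly discontinuous at $S$) local product structure of $f$ from Remark \ref{localproduct}. Since stable and unstable sets are characterized purely dynamically --- $W^s(x) = \{y : d(f^n y, f^n x) \to 0\}$, and likewise $W^u$ via $f^{-1}$ --- the relation $h \circ \tilde f = f \circ h$ immediately gives $h(W^s_{\tilde f}(x)) = W^s_f(hx)$ and $h(W^u_{\tilde f}(x)) = W^u_f(hx)$. For the local versions, uniform continuity of $h$ and of $h^{-1}$ on the compact torus yields, for any $\ep > 0$, some $\delta > 0$ with $d(a,b) < \delta \Rightarrow d(ha,hb) < \ep$ and conversely; combined with expansiveness (established in the proof of Theorem \ref{Anosov-conjugacy}) this shows that, for $\ep,\delta$ small, $h$ maps $W^\eta_{\tilde f,\delta}(x)$ into $W^\eta_{f,\ep}(hx)$ for $\eta = s,u$, and hence that $h([x,y]_{\tilde f}) = [hx,hy]_f$ whenever both brackets are defined. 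Consequently each $R_i' = h(R_i)$ is a rectangle for $f$: applying $h$ to $[h^{-1}u,h^{-1}v]_{\tilde f} \in R_i$ gives $[u,v]_f \in R_i'$ for $u,v \in R_i'$; and because $h$ is a homeomorphism, $R_i' = \cl(\mathrm{int}\,R_i')$, the $R_i'$ have pairwise disjoint interiors, and they cover $\T^2$.

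Next I would transfer the Markov property. Writing $W^u(hx, R_i') = h(W^u(x,R_i))$ --- which follows from the previous paragraph, since the unstable slice of a rectangle through a point is its intersection with the local unstable manifold of that point --- the conjugacy turns $\tilde f(W^u(x,R_i)) \supseteq W^u(\tilde f x, R_j)$ into $f(W^u(hx,R_i')) = h(\tilde f(W^u(x,R_i))) \supseteq h(W^u(\tilde f x, R_j)) = W^u(f(hx), R_j')$, and symmetrically for the stable overlap condition; moreover $\mathrm{int}\,R_i' \cap f^{-1}\mathrm{int}\,R_j' \neq \emptyset$ is equivalent via $h$ to $\mathrm{int}\,R_i \cap \tilde f^{-1}\mathrm{int}\,R_j \neq \emptyset$. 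For the diameter bound, given $\ep > 0$ uniform continuity of $h$ provides $\delta > 0$ with $\diam h(A) < \ep$ whenever $\diam A < \delta$; choosing the Sinai--Bowen partition $\mathcal{R}$ with $\diam\mathcal{R} < \delta$ gives $\diam\{R_i'\} < \ep$. The main obstacle is the second paragraph: making rigorous the claim that the topological conjugacy intertwines the two local product structures (so that images of Anosov rectangles are genuine $f$-rectangles), taking care near the singular set $S$ where the splitting for $f$ may be discontinuous; once this is secured, the Markov overlap conditions and the diameter estimate are routine bookkeeping.
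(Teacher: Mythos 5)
Your proposal matches the paper's approach: the corollary is proved by transporting a small-diameter Markov partition of the conjugate Anosov map $\tilde f$ through the topological conjugacy furnished by Theorem \ref{Anosov-conjugacy}, exactly as you describe and as the paper does implicitly at the start of Section 5 (where $\Ps = g(\tilde\Ps)$ is declared a Markov partition for $f$ of arbitrarily small diameter by continuity of $g$). Your observation that the conjugacy intertwines the two local product structures because stable and unstable sets are dynamically characterized is precisely the mechanism, and the remaining verifications you flag as routine bookkeeping are indeed standard.
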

	
	
	\section{Statement of Results}
	
	Recall from Theorem \ref{bundle-split} \ that there is an invariant decomposition of the tangent bundle $TM = E^u \oplus E^s$ into unstable and stable subspaces, so that $E^u(x)$ and $E^s(x)$ respectively are tangent to the local unstable and stable manifolds $W^u_\epsilon(x)$ and $W^s_\epsilon(x)$. Define the \emph{geometric $t$-potential} $\phi_t(x) = -t\log\big| DF|_{E^u(x)}\big|$. Given a continuous potential function $\phi : M \to \R$, a probability measure $\mu_\phi$ on $M$ is an \emph{equilibrium measure} for $\phi$ if 
	\[
	P_f(\phi) = h_{\mu_\phi}(f) + \int_M \phi \, d\mu_\phi,
	\]
	where $h_{\mu_\phi}(f)$ is the metric entropy of $(M, f)$ with respect to $\mu_\phi$, and $P_f(\phi)$ is the topological pressure of $\phi$; that is, $P_f(\phi)$ is the supremum of $h_\mu(f) + \int_M \phi \, d\mu$ over all $f$-invariant probability measures $\mu$. We denote $\mu_t := \mu_{\phi_t}$. 
	
	Additionally, we say that $f$ has \emph{exponential decay of correlations} with respect to a measure $\mu \in \mathcal{M}(f, M)$ and a class of functions $\mathcal H$ on $M$ if there exists $\kappa \in (0,1)$ such that for any $h_1, h_2 \in \mathcal H$, 
	\[
	\left| \int h_1\left(f^n(x)\right) h_2(x) \, d\mu(x) - \int h_1(x) \, d\mu(x) \int h_2(x)\,d\mu(x)\right| \leq C\kappa^n
	\]
	for some $C = C(h_1, h_2) > 0$. Furthermore, $f$ is said to satisfy the \emph{Central Limit Theorem} (CLT) for a class $\mathcal H$ of functions if for any $h \in \mathcal H$ that is not a coboundary (ie. $h \neq h' \circ f - h'$ for any $h' \in \mathcal H$), there exists $\sigma > 0$ such that 
	\[
	\lim_{n \to \infty} \mu\bigg\{\frac{1}{\sqrt n} \sum_{i=0}^{n-1} \Big(h(f^i(x)) - \int h \, d\mu \Big) < t \bigg\} = \frac 1{\sigma \sqrt{2\pi}} \int_{-\infty}^{t} e^{-\tau^2/2\sigma^2} \, d\tau.
	\]
	
	We now state our main result. 
	
	\begin{theorem}\label{main-theorem}
		Given a transitive nondegenerate almost Anosov diffeomorphism $f : \T^2 \to \T^2$ satisfying Assumption \ref{A} for which $r_1$ is sufficiently small, the following statements hold: 
		\begin{enumerate}
			\item There is a $t_0 < 0$ so that for any $t \in (t_0, 1)$, there is a unique equilibrium measure $\mu_t$ associated to $\phi_t$. This equilibrium measure has exponential decay of correlations and satisfies the central limit theorem with respect to a class of functions containing all H\"older continuous functions on $\T^2$. 
			\item For $t=1$, there are two equilibrium measures associated to $\phi_1$: the Dirac measure $\delta_0$ centered at the origin, and a unique invariant SRB measure $\mu$. If $f$ is Lebesgue-area preserving, this SRB measure coincides with Lebesgue measure. 
			\item For $t > 1$, $\delta_0$ is the unique equilibrium measure associated to $\phi_t$. 
		\end{enumerate}
	\end{theorem}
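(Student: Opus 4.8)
The plan is to combine the Young tower for $f$ --- built from the small Markov partitions furnished by the Corollary to Theorem \ref{Anosov-conjugacy}, which is the business of Section 5 --- with the abstract existence, uniqueness, and statistical theorems for Young's diffeomorphisms developed in Section 4 (following \cite{PSZ16, PSZ17}). Write $\Lambda$ for the base of the tower, $R\colon\Lambda\to\N$ for its return time, $\mu_\gamma$ for the reference measure on an unstable leaf of $\Lambda$, and $\bar\phi_t := \sum_{k=0}^{R-1}\phi_t\circ f^k$ for the induced potential. Everything rests on the tail of $R$: using nondegeneracy and the normal form (\ref{0-coords})--(\ref{simple-phi-psi}), one must show that orbit segments entering a small neighbourhood of the indifferent fixed point $0$ escape at a polynomial rate governed by $a,b,c,d$, so that $\mu_\gamma\{R>n\}$ decays like a negative power of $n$. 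Establishing this estimate, together with the bounded-distortion bound along the return branches --- a two-dimensional, non-diagonal Pomeau--Manneville-type analysis controlling simultaneously the slow expansion along $E^u$, the contraction along $E^s$, and the distortion (this is where the hypothesis that $r_1$ be small enters) --- is the step I expect to be the main obstacle; the rest is bookkeeping with the Section 4 machinery.

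The cases $t\ge 1$ I would dispatch first, directly and without the tower. Since $Df_0 = \mathrm{Id}$ we have $\phi_t(0) = 0$, so $h_{\delta_0}(f) + \int\phi_t\,d\delta_0 = 0$ and $P_f(\phi_t)\ge 0$ for every $t$. For any $f$-invariant $\mu$, $E^u$ is one-dimensional and carries the only nonnegative Lyapunov exponent, so Ruelle's inequality reads $h_\mu(f)\le\chi_\mu$, where $\chi_\mu := \int\log\big|Df|_{E^u}\big|\,d\mu$; moreover condition (ii) of Definition \ref{AAD-def} gives $\log\big|Df_x|_{E^u_x}\big| > 0$ for $x\ne 0$, whence $\chi_\mu > 0$ unless $\mu=\delta_0$. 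Therefore $h_\mu(f) + \int\phi_t\,d\mu = h_\mu(f) - t\chi_\mu \le (1-t)\chi_\mu$. For $t>1$ this is strictly negative for every $\mu\ne\delta_0$, so $\delta_0$ is the unique equilibrium state, proving (3). For $t=1$ it shows $P_f(\phi_1) = 0$ and that the equilibrium states are exactly the measures with $h_\mu(f) = \chi_\mu$, i.e.\ the SRB measures; by \cite{Hu00} and the tower construction (with transitivity), the ergodic ones are precisely $\delta_0$ and a unique non-atomic SRB measure $\mu^+$, and if $f$ preserves Lebesgue area then Lebesgue --- being a non-atomic, hyperbolic, absolutely continuous invariant measure --- coincides with $\mu^+$. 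This gives (2).

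For part (1) with $t<1$, the task is to verify the hypotheses of the Section 4 theorem for $\phi_t$. Two of them hold for all $t$: $\bar\phi_t$ is locally H\"older along the tower with constant $\propto|t|$ (the non-H\"older behaviour of $\phi_t$ at $0$ is absorbed by the bounded distortion along the return branches), and $P_f(\phi_t)\le\htop(f)+\|\phi_t\|_\infty<\infty$. The remaining hypothesis is the convergence of the induced tail series $\sum_n\big(\sum_{J:\,R|_J=n}\sup_J e^{\bar\phi_t - nP_f(\phi_t)}\big)$ together with the accompanying positive-recurrence condition; the threshold $t_0<0$ is by definition the infimum of the $t$ for which this holds. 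It is strictly negative because for $t$ just below $1$ the verification reduces essentially to the SRB case $t=1$, and the hypotheses persist as $t$ decreases, exactly as for the Katok map in \cite{PSZ17}. Granting these hypotheses, the Section 4 theorem yields a Gibbs measure for $\bar\phi_t - RP_f(\phi_t)$ on the induced system --- which we may assume mixing after replacing $f$ by a power --- that projects to an equilibrium measure $\mu_t$ for $\phi_t$ on $\T^2$; transitivity makes this Gibbs measure, hence $\mu_t$, unique and forces every equilibrium state of $\phi_t$ to lift to the tower, so $\mu_t$ is the unique one.

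Finally, the statistical properties in (1) follow from the fact that $P_f(\phi_t)>0$ strictly when $t<1$: testing the variational principle against $\mu^+$ and using Pesin's entropy formula gives $h_{\mu^+}(f) + \int\phi_t\,d\mu^+ = (1-t)\chi_{\mu^+} > 0$ since $\chi_{\mu^+}>0$. Hence the return-time tail of the lifted measure $\widetilde\mu_t$, which is comparable to a polynomial times $e^{-nP_f(\phi_t)}$, is exponentially small, and the tower estimates of Section 4 then deliver exponential decay of correlations and the central limit theorem for every observable lifting to the admissible class on the tower --- in particular for all H\"older continuous functions on $\T^2$.
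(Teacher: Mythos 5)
Your treatment of parts (2) and (3) is essentially the paper's argument: Margulis--Ruelle gives $h_\mu(f) + \int\phi_t\,d\mu \leq (1-t)\chi_\mu$ with $\chi_\mu > 0$ unless $\mu = \delta_0$, which settles $t>1$ outright; for $t=1$ the equality cases are exactly the Pesin-entropy-formula measures, and \cite{LY84} plus \cite{RH2TU} give uniqueness of the SRB measure, which must be Lebesgue if $f$ is area-preserving. That all matches.

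Where you diverge substantively is part (1), and there is a real gap. You propose to establish the tail estimate on the return time $R$ by a direct local analysis near the indifferent fixed point --- ``orbit segments entering a small neighbourhood of $0$ escape at a polynomial rate governed by $a,b,c,d$'' --- and to derive positive recurrence and finiteness of the relevant series from that Pomeau--Manneville-type computation. The paper does not do this and does not need to. Its crucial ingredient is Theorem \ref{Anosov-conjugacy}: the almost Anosov map $f$ is topologically conjugate to a \emph{linear} Anosov toral automorphism $\tilde f$. The Young tower for $f$ is the conjugate image of the tower for $\tilde f$ built from the same Markov partition, so the inducing times and the counting function $S_n = \#\{\Lambda_i^s : \tau_i = n\}$ are \emph{identical} for the two systems. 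Lemma 5.1 (a combinatorial statement about the Anosov tower) gives $S_n \leq e^{hn}$ with $h < \htop(\tilde f) = \htop(f)$, with no local analysis near $0$ whatsoever. The remaining step is to check that $h < h_{\mu_1}(f)$, which is where the hypothesis ``$r_1$ sufficiently small'' enters: for $r_1$ small, $\log|Df|_{E^u}|$ agrees with $\log|D\tilde f|_{E^u}|$ outside $B_{r_1}(0)$, so by the Pesin entropy formula $h_{\mu_1}(f)$ is within $\epsilon$ of $h_m(\tilde f) = \htop(\tilde f)$. This is the entire content of the ``small $r_1$'' assumption for part (1) --- it is not, as you suggest, a device for controlling distortion or the escape rate. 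Your plan as written would require proving precisely the polynomial tail estimates that the conjugacy trick renders unnecessary, and indeed your sketch leaves those estimates as ``the main obstacle'' without supplying them; the paper's route dissolves that obstacle.

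Two further points. First, once $S_n \leq e^{hn}$ with $h < h_{\mu_1}(f)$ is in hand, the existence of $t_0 < 0$ and uniqueness of $\mu_t$ for $t \in (t_0,1)$ come from Proposition \ref{PSZ-4.1}(2) directly; your heuristic that the conditions ``persist as $t$ decreases'' is not the mechanism. Second, for the decay of correlations and CLT you correctly flag the arithmetic condition (the paper gets it from Bernoullicity of $\tilde f$, preserved under conjugacy), but you omit verifying the auxiliary distance bound (\ref{4.2 bound}) required by Proposition \ref{PSZ-4.1}(3), which the paper handles by observing that along a tower block $d(f^j(x),f^j(y))$ is monotone in $j$ and bounded by $\diam(P)$. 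Your appeal to ``return-time tail ... comparable to a polynomial times $e^{-nP_f(\phi_t)}$'' is not how the paper obtains exponential mixing and would need its own justification.
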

	
	\begin{remark}
		Uniqueness of $\mu_t$ for $t \in (t_0, 1)$ implies that this equilibrium measure is ergodic. Since the correlations decay, in fact $\mu_t$ is mixing. 
	\end{remark}
	
	\section{Thermodynamics of Young's diffeomorphisms}
	
	To prove the existence of equilibrium measures associated to $\phi_t$ for nondegenerate almost Anosov maps $f$ on $\T^2$, we begin by showing that they are \emph{Young's diffeomorphisms}. We now define this class of maps. 
	
	Given a $C^{1+\alpha}$ diffeomorphism $f$ on a compact Riemannian manifold $M$, we call an embedded $C^1$ disc $\gamma \subset M$ an \emph{unstable disc} (resp. \emph{stable disc}) if for all $x, y \in \gamma$, we have $d(f^{-n}(x), f^{-n}(y)) \to 0$  (resp. $d(f^n(x), f^n(y)) \to 0$) as $n \to +\infty$. A collection of embedded $C^1$ discs $\Gamma = \{\gamma_i\}_{i \in \mathcal I}$ is a \emph{continuous family of unstable discs} if there is a Borel subset $K^s \subset M$ and a homeomorphism $\Phi : K^s \times D^u \to \union_i \gamma_i$, where $D^u \subset \R^d$ is the closed unit disc for some $d < \dim M$, satisfying: 
	\begin{itemize}
		\item The assignment $x \mapsto \Phi|_{\{x\} \times D^u}$ is a continuous map from $K^s$ to the space of $C^1$ embeddings $D^u \hookrightarrow M$, and this assignment can be extended to the closure $\cl(K^s)$; 
		\item For every $x \in K^s$, $\gamma = \Phi(\{x\} \times D^u)$ is an unstable disc in $\Gamma$.
	\end{itemize}
	Thus the index set $\mathcal I$ may be taken to be $K^s \times \{0\} \subset K^s \times D^u$. We define \emph{continuous families of stable discs} analogously. 
	
	A subset $\Lambda \subset M$ has \emph{hyperbolic product structure} if there is a continuous family $\Gamma^u = \{\gamma^u_i\}_{i \in \mathcal I}$ of unstable discs and a continuous family $\Gamma^s = \{\gamma^s_j\}_{j \in \mathcal J}$ of stable discs such that
	\begin{itemize}
		\item $\dim \gamma^u_i + \dim\gamma^s_j = \dim M$ for all $i,j$; 
		\item the unstable discs are transversal to the stable discs, with an angle uniformly bounded away from 0; 
		\item each unstable disc intersects each stable disc in exactly one point; 
		\item $\Lambda = \big( \union_i \gamma^u_i\big) \cap \big(\union_j \gamma^s_j \big)$. 
	\end{itemize}
	
	A subset $\Lambda_0 \subset \Lambda$ is an \emph{s-subset} if it has hyperbolic product structure defined by the same family $\Gamma^u$ of unstable discs as $\Lambda$,  and a continuous subfamily of stable discs $\Gamma_0^s \subset \Gamma^s$. A \emph{u-subset} is defined analogously. 
	
	\begin{definition}
		A $C^{1+\alpha}$ diffeomorphism $f : M \to M$, with $M$ a compact Riemannian manifold, is a \emph{Young's diffeomorphism} if the following conditions are satisfied: 
		\begin{enumerate}[label=(Y\arabic*)]
			\item There exists $\Lambda \subset M$ (called the \emph{base}) with hyperbolic product structure, a countable collection of continuous subfamilies $\Gamma_i^s \subset \Gamma^s$ of stable discs, and positive integers $\tau_i$, $i \in \N$, such that the $s$-subsets
			\[
			\Lambda_i^s := \union_{\gamma \in \Gamma^s_i} \big(\gamma \cap \Lambda \big) \subset \Lambda
			\]
			are pairwise disjoint and satisfy:
			\begin{enumerate}[label=(\alph*)]
				\item \emph{invariance}: for $x \in \Lambda_i^s$, 
				\[
				f^{\tau_i}(\gamma^s(x)) \subset \gamma^s(f^{\tau_i}(x)), \quad \textrm{and} \quad f^{\tau_i}(\gamma^u(x)) \supset \gamma^u(f^{\tau_i}(x)),
				\]
				where $\gamma^{u,s}(x)$ denotes the (un)stable disc containing $x$; and, 
				\item \emph{Markov property}: $\Lambda_i^u := f^{\tau_i}(\Lambda_i^s)$ is a $u$-subset of $\Lambda$ such that for $x \in \Lambda_i^s$, 
				\[
				f^{-\tau_i}(\gamma^s(f^{\tau_i}(x)) \cap \Lambda_i^u) = \gamma^s(x) \cap \Lambda, \quad \textrm{and} \quad f^{\tau_i} (\gamma^u(x) \cap \Lambda_i^s) = \gamma^u(f^{\tau_i}(x)) \cap \Lambda. 
				\]
			\end{enumerate}
			\item For $\gamma^u \in \Gamma^u$, we have
			\[
			\mu_{\gamma^u}(\gamma^u \cap \Lambda) > 0, \quad \textrm{and} \quad \mu_{\gamma^u}\Big( \mathrm{cl}\big( \left(\Lambda \setminus \textstyle\union_i \Lambda_i^s\right) \cap \gamma^u\big)\Big) = 0,
			\]
			where $\mu_{\gamma^u}$ is the induced Riemannian leaf volume on $\gamma^u$ and $\mathrm{cl}(A)$ denotes the closure of $A$ in $\T^2$ for $A \subseteq \T^2$. 
			\item There is $a \in (0,1)$ so that for any $i \in \N$, we have:
			\begin{enumerate}[label=(\alph*)]
				\item For $x \in \Lambda_i^s$ and $y \in \gamma^s(x)$, 
				\[
				d(F(x), F(y)) \leq ad(x,y);
				\]
				\item For $x \in \Lambda_i^s$ and $y \in \gamma^u(x) \cap \Lambda_i^s$, 
				\[
				d(x,y) \leq ad(F(x), F(y)),
				\]
			\end{enumerate}
			where $F : \union_i \Lambda_{i}^s \to \Lambda$ is the \emph{induced map} defined by 
			\[
			F|_{\Lambda^s_i} := f^{\tau_i}|_{\Lambda^s_i}.
			\]
			\item Denote $J^u F(x) = \det\big(DF|_{E^u(x)}\big)$. There exist $c > 0$ and $\kappa \in (0,1)$ such that: 
			\begin{enumerate}[label=(\alph*)]
				\item For all $n \geq 0$, $x \in F^{-n}\left(\union_i \Lambda_i^s\right)$ and $y \in \gamma^s(x)$, we have 
				\[
				\left| \log \frac{J^u F(F^n(x))}{J^u F(F^n(y))}\right| \leq c\kappa^n;
				\]
				\item For any $i_0, \ldots, i_n \in \N$ with $F^k(x), F^k(y) \in \Lambda^s_{i_k}$ for $0 \leq k \leq n$ and $y \in \gamma^u(x)$, we have 
				\[
				\left| \log\frac{J^u F(F^{n-k}(x))}{J^u F(F^{n-k}(y))}\right| \leq c\kappa^k.
				\]
			\end{enumerate}
			\item There is some $\gamma^u \in \Gamma^u$ such that 
			\[
			\sum_{i=1}^\infty \tau_i \mu_{\gamma^u} \left(\Lambda_i^s\right) < \infty. 
			\]
		\end{enumerate}
	\end{definition}
	
	
	We say the tower satisfies the \emph{arithmetic condition} if the greatest common divisor of the integers $\{\tau_i\}$ is 1. 
	
	We use the following result to discuss thermodynamics of Young's diffeomorphisms, which was originally presented as Proposition 4.1 and Remark 4 in \cite{PSZ17}. 
	
	\begin{proposition}\label{PSZ-4.1}
		Let $f : M \to M$ be a $C^{1+\alpha}$ diffeomorphism of a compact smooth Riemannian manifold $M$ satisfying conditions (Y1)-(Y5), and assume $\tau$ is the first return time to the base of the tower. Then the following hold: 
		\begin{enumerate}[label=(\arabic*)]
			\item There exists an equilibrium measure $\mu_1$ for the potential $\phi_1$, which is the unique SRB measure. 
			\item Assume that for some constants $C>0$ and $0 < h < h_{\mu_1}(f)$, with $h_{\mu_1}(f)$ the metric entropy, we have
			\[
			S_n := \# \left\{ \Lambda_i^s : \tau_i = n \right\} \leq Ce^{hn}
			\]
			Then there exists $t_0 < 0$ so that for every $t \in (t_0, 1)$, there exists a measure $\mu_t \in \mathcal M(f,Y)$, where $Y = \left\{f^k(x) : x \in \union \Lambda_i^s, \: 0 \leq k \leq \tau(x)-1 \right\}$, which is a unique equilibrium measure for the potential $\phi_t$. 
			\item Assume that the tower satisfies the arithmetic condition, and that there is $K > 0$ such that for every $i \geq 0$, every $x,y \in \Lambda_i^s$, and any $j \in \{0, \ldots, \tau_i\}$, 
			\begin{equation}\label{4.2 bound}
			d\left(f^j(x), f^j(y)\right) \leq K\max\{d(x,y), d(F(x), F(y))\}.
			\end{equation}
			Then for every $t_0 < t < 1$, the measure $\mu_t$ has exponential decay of correlations and satisfies the central limit theorem with respect to a class of functions which contains all H\"older continuous functions on $M$. 
		\end{enumerate}
	\end{proposition}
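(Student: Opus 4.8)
The plan is to reduce the whole statement to the thermodynamic formalism of the induced map $F = f^{\tau}$ on the base $\Lambda$, exploiting the tower structure (Y1)--(Y5). First I would pass to the one-sided quotient: collapsing each stable disc $\gamma^s(x)\cap\Lambda$ to a point turns $F$ into an expanding full-branch Markov map $\bar F$ of a metric space $\bar\Lambda$ with countably many branches indexed by the $s$-subsets $\Lambda_i^s$. Conditions (Y3) and (Y4) guarantee that the induced geometric potential $\bar\Phi_t$, obtained by summing $\phi_t$ along return orbits, descends to $\bar\Lambda$ up to a coboundary and is locally H\"older in the symbolic metric, with bounded distortion of the induced unstable Jacobian $J^uF$. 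This is the standard reduction to a countable Markov shift, and it makes available the Ruelle--Perron--Frobenius and Gibbs theory for such systems.

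Next I would solve the induced pressure equation. For $\phi_1$ the induced potential is $-\log J^uF$; by (Y4) and (Y5) it admits a Gibbs state $\nu_1$ on $\bar\Lambda$ with $\int\tau\,d\nu_1 < \infty$, and lifting $\nu_1$ through the stable holonomies and renormalizing via Abramov's formula produces an $f$-invariant probability measure which is the SRB measure, unique by transitivity together with the local product structure (Remark \ref{localproduct}); this gives part (1). For part (2), I would study the family of normalized induced potentials $\bar\Phi_t - \tau\,P_f(\phi_t)$: the hypothesis $S_n \le Ce^{hn}$ with $h < h_{\mu_1}(f)$ is precisely what forces the series defining the induced pressure to converge, the potential to be positive recurrent, and its unique equilibrium state $\nu_t$ to still satisfy $\int\tau\,d\nu_t < \infty$, for all $t$ in a left-neighborhood of $1$. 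Abramov's formula then yields the equilibrium measure $\mu_t \in \mathcal M(f,Y)$; uniqueness follows because any equilibrium state of $\phi_t$ supported on $Y$ must charge the base and hence induce an equilibrium state of the (uniquely ergodic) induced system. The constant $t_0 < 0$ is the infimum of those $t$ for which the excursion contributions to the induced pressure remain summable; below it the pressure becomes infinite.

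For part (3) the extra hypotheses let me invoke Young's tower theorem on exponential mixing. The Gibbs property of $\nu_t$, combined with $S_n \le Ce^{hn}$ and the estimate that for $t < 1$ the geometric potential suppresses long near-fixed-point excursions strongly enough to overcome the entropy growth $e^{hn}$, shows that the return-time tail $\nu_t\{\tau = n\}$ decays exponentially. The arithmetic condition $\gcd\{\tau_i\} = 1$ provides aperiodicity, so Young's spectral-gap argument gives exponential decay of correlations and, via Gordin's martingale approximation, the central limit theorem. Finally, the metric bound \eqref{4.2 bound} is exactly what guarantees that a H\"older observable on $\T^2$ pulls back to a function that is H\"older in the tower metric, so both conclusions hold for the class of functions claimed.

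The step I expect to be the main obstacle is the second one for $t < 1$: obtaining uniform control of the contributions of long excursions near the indifferent fixed point to the induced pressure, simultaneously yielding finiteness, positive recurrence, and integrability of $\tau$ with respect to $\nu_t$. This is where the nondegeneracy of $f$ and the growth bound $S_n \le Ce^{hn}$ do the real work, and it underlies both existence and uniqueness in part (2) and the exponential tail estimate needed in part (3). Since the proposition is a restatement of \cite{PSZ17}, one may alternatively simply appeal to that reference.
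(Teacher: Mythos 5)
The paper itself does not prove this proposition; it is stated with an explicit citation to \cite{PSZ17} (Proposition 4.1 and Remark 4 there, building on \cite{PSZ16}), and you correctly identified this option at the end of your sketch. Your outline — inducing to the base, quotienting along stable discs to obtain a countable full-branch Markov map, applying Ruelle--Perron--Frobenius and Gibbs theory to the induced potential, using the growth bound $S_n \le Ce^{hn}$ with $h < h_{\mu_1}(f)$ to get finiteness and positive recurrence of the induced pressure for $t$ near $1$, and lifting via Abramov's formula — is a faithful reconstruction of the strategy used in those references, and your description of how the arithmetic condition and the metric bound \eqref{4.2 bound} feed into the exponential mixing and CLT matches \cite{PSZ16}. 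Since the paper's ``proof'' is a citation, no direct comparison of approaches is possible; it suffices to note that your sketch is consistent with the cited source and that you flagged the citation as the intended route.
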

	
	\section{Tower representations of almost Anosov diffeomorphisms}
	
	Assume our almost Anosov map $f : \T^2 \to \T^2$ is nondegenerate and satisfies Assumption \ref{A}. Let $\tilde f$ be an Anosov diffeomorphism topologically conjugate to $f$ (the existence of such a map is given by Theorem \ref{Anosov-conjugacy}), and assume without loss of generality that $\tilde f$ is a linear automorphism. Let $\tilde\Ps$ be a finite Markov partition for $\tilde f$, and let $\tilde P \in \tilde\Ps$ be a partition element not intersecting $B_{r_0}(0)$. For $x \in \tilde P$, denote $\tilde\gamma^s(x)$ and $\tilde\gamma^u(x)$ respectively to be the connected component of the intersection of the stable and unstable leaves with $\tilde P$. We will construct a Young tower for the Anosov system $\tilde f: \T^2 \to \T^2$. The construction comes from Section 6.1 of \cite{PSZ17}, but we restate it here for the reader's convenience. 
	
	Let $\tilde\tau(x)$ be the first return time of $x$ to $\mathrm{Int}\tilde P$ for $x \in \tilde P$. For $x$ with $\tilde \tau(x) < \infty$, define:
	\[
	\tilde\Lambda^s(x) = \union_{y \in \tilde U^u(x) \setminus \tilde A^u(x)} \tilde\gamma^s(y),
	\]
	where $\tilde U^u(x) \subseteq \tilde\gamma^u(x)$ is an interval containing $x$, open in the induced topology of $\tilde\gamma^u(x)$, and $\tilde A^u(x) \subset \tilde U^u(x)$ is the set of points that either lie on the boundary of the Markov partition, or never return to $\tilde P$. Observe $\tilde{\Lambda}^s(x)$ is also expressible as 
	\[
	\tilde\Lambda^s(x) = \union_{y \in \tilde{\gamma}^s(x)} \theta_y\left(\tilde U^u(x)\right)\setminus \theta_y \left(\tilde A^u(x)\right),
	\]
	where $\theta_y : \tilde{\gamma}^u(x) \to \tilde{\gamma}^u(y)$ is the map $z \mapsto [y,z]$ for each $y \in \tilde{\gamma}^s(x)$, attained by sliding a point $z \in \tilde{\gamma}^u(x)$ along $\tilde\gamma^s(z)$ to the intersection of $\tilde\gamma^s(z)$ with $\tilde\gamma^u(y)$ (see remark \ref{localproduct}). In particular, $\tilde{\gamma}^u(y) \cap \tilde \Lambda^s(x) = \theta_{[y,x]}\left( \tilde U^u(x)\right) \setminus \theta_{[y,x]}\left(\tilde A^u(x) \right)$ for $y \in \tilde{\Lambda}^s(x)$. One can show the leaf volume of $\tilde A^u(x)$ is 0, so the above expression for $\tilde{\Lambda}^s(x)$ implies the leaf volume of $\tilde\gamma^u(y) \cap \tilde\Lambda^s(x)$ is positive. We further choose our interval $U^u(x)$ so that
	\begin{itemize}
		\item for $y \in \tilde \Lambda^s(x)$, we have $\tilde\tau(y) = \tilde\tau(x)$; and, 
		\item for $y \in \tilde P$ with $\tilde\tau(x) = \tilde\tau(y)$, we have $y \in \tilde\Lambda(z)$ for some $z \in \tilde P$. 
	\end{itemize}
	One can show the image under $\tilde f^{\tilde\tau(x)}$ of $\tilde\Lambda^s(x)$ is a $u$-subset containing $\tilde f^{\tilde \tau(x)}(x)$, and that for $x, y \in \tilde P$ with finite return time, either $\tilde\Lambda^s(x)$ and $\tilde\Lambda^s(y)$ are disjoint or coinciding. As discussed in \cite{PSZ17}, this gives us a countable collection of disjoint sets $\tilde\Lambda^s_i$ and numbers $\tilde\tau_i$ for which the Anosov system $(\T^2, \tilde f)$ is a Young's diffeomorphism, with $s$-sets $\tilde\Lambda_i^s$, inducing times $\tilde\tau_i$, and tower base
	\[
	\tilde\Lambda := \union_{i =1}^\infty \tilde\Lambda_i^s.
	\]
	The details in verifying conditions (Y1) - (Y5) for this Anosov system are left to the reader; in particular, conditions (Y1), (Y3), and (Y4) are immediate for linear hyperbolic toral automorphisms. See the discussion in \cite{PSZ17}, Section 6.2. 
	
	\begin{lemma}
		There exists $h < \htop(\tilde f)$ such that $S_n \leq e^{hn}$, where $S_n$ is the number of $s$-sets $\tilde\Lambda^s_i$ with inducing time $\tilde\tau_i = n$. 
	\end{lemma}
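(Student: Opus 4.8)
The plan is to count first‑return itineraries in the symbolic model of $\tilde f$ supplied by the Markov partition $\tilde\Ps$. Write $\tilde\Ps=\{P_1,\dots,P_k\}$, let $j_0$ be the index with $P_{j_0}=\tilde P$, and let $A=(A_{ij})$ be the associated $k\times k$ transition matrix. Since $\tilde f$ is a linear hyperbolic toral automorphism it is topologically mixing, so $A$ is primitive; coding $\tilde f$ by $\tilde\Ps$ is a finite‑to‑one factor map, so $\htop(\tilde f)=\log\rho(A)$, where $\rho(A)$ is the Perron eigenvalue of $A$. By the construction recalled above, for each $n$ the set $\{x\in\tilde P:\tilde\tau(x)=n\}$ is, up to a set of leaf volume zero, the disjoint union of the admissible cylinders $[\,j_0\,a_1\cdots a_{n-1}\,j_0\,]$ with $a_\ell\neq j_0$ for $1\le \ell\le n-1$, and each such cylinder is an $s$‑set $\tilde\Lambda_i^s$ with $\tilde\tau_i=n$; conversely distinct admissible words give distinct $s$‑sets. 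Hence $S_n$ is bounded by the number of these length‑$(n+1)$ first‑return words.

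Next I would express this count through a submatrix. Let $B$ be the principal submatrix of $A$ on the index set $\{1,\dots,k\}\setminus\{j_0\}$, i.e.\ the transition matrix on the symbols $\neq j_0$. For $n\ge 2$ one has
\[
S_n \;\le\; \sum_{p,\,q\neq j_0} A_{j_0 p}\,(B^{\,n-2})_{pq}\,A_{q j_0}\;\le\; k^2\max_{p,q}(B^{\,n-2})_{pq},
\]
while $S_1=A_{j_0 j_0}\le 1$. The heart of the matter is the strict inequality $\rho(B)<\rho(A)$, a standard consequence of Perron--Frobenius theory. Indeed, if $\rho(B)=\rho(A)=:\rho$, choose a nonzero vector $w\ge 0$ with $Bw=\rho w$ and extend it by $0$ at the coordinate $j_0$ to $\hat w\ge 0$. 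For $i\neq j_0$ one gets $(A\hat w)_i=(Bw)_i=\rho\hat w_i$, and $(A\hat w)_{j_0}=\sum_{l\neq j_0}A_{j_0 l}w_l\ge 0=\rho\hat w_{j_0}$, so $A\hat w\ge\rho\hat w$. If this inequality is an equality, then $\hat w$ is a nonnegative eigenvector of the irreducible matrix $A$ for its Perron eigenvalue, hence a positive multiple of the strictly positive Perron eigenvector, contradicting $\hat w_{j_0}=0$. Otherwise $A\hat w-\rho\hat w\ge 0$ is nonzero (positive in coordinate $j_0$), and pairing with a strictly positive row vector $v$ with $vA=\rho v$ gives $\rho\,v\hat w=v(A\hat w)=\rho\,v\hat w+v(A\hat w-\rho\hat w)>\rho\,v\hat w$, again impossible. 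Thus $\rho(B)<\rho(A)$.

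Finally, fix $\epsilon>0$ with $\rho(B)+\epsilon<\rho(A)$; by Gelfand's formula there is $C=C(\tilde\Ps)$ with $\max_{p,q}(B^m)_{pq}\le C\bigl(\rho(B)+\epsilon\bigr)^m$ for all $m\ge 0$, whence $S_n\le C'\bigl(\rho(B)+\epsilon\bigr)^n$ for every $n\ge 1$. Choosing $h$ with $\log\bigl(\rho(B)+\epsilon\bigr)<h<\log\rho(A)=\htop(\tilde f)$ yields $S_n\le e^{hn}$ for all $n$ beyond a threshold $n_0$; the finitely many remaining values of $n$ contribute only a multiplicative constant, which is already permitted in the hypothesis of Proposition~\ref{PSZ-4.1}(2) and can in any case be removed by taking $\tilde\Ps$ coarse enough that the bound already holds for small $n$. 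This gives the asserted estimate.

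I expect the only delicate point to be the first step: verifying that the geometric $s$‑sets of the tower are indexed injectively by the symbolic first‑return cylinders, so that the leaf‑volume‑null exceptional sets (partition boundaries and non‑returning points) neither inflate the count nor obstruct the identification; this is implicit in the construction borrowed from \cite{PSZ17}. The substantive ingredient is the strict spectral gap $\rho(B)<\rho(A)$, which is exactly where topological mixing --- that is, irreducibility of $A$ --- is used, and which is what forces the strict inequality $h<\htop(\tilde f)$.
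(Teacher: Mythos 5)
Your argument is correct. Note that the paper does not give its own proof here: it simply cites Lemma~6.1 of \cite{PSZ17}, where the analogous bound is established for the Katok map by exactly the kind of symbolic first-return counting you carry out. Your write-up is a self-contained version of that standard route: identify $S_n$ with the number of admissible words $[\,j_0 a_1\cdots a_{n-1} j_0\,]$ avoiding $j_0$ in the interior, bound this by powers of the principal submatrix $B$, and invoke the Perron--Frobenius spectral gap $\rho(B)<\rho(A)$ for an irreducible $A$ with a row/column deleted. The spectral-gap argument is correct in both cases (equality versus strict inequality of $A\hat w\ge\rho\hat w$), and your remark that the multiplicative constant can be absorbed for large $n$ (or is anyway permitted by Proposition~\ref{PSZ-4.1}(2)) is the right way to reconcile the clean statement $S_n\le e^{hn}$ with Gelfand's estimate. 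The only point worth being a bit more explicit about, as you yourself flag, is that the tower's $s$-sets $\tilde\Lambda_i^s$ with $\tilde\tau_i=n$ are in bijection (up to leaf-volume-zero exceptional sets coming from partition boundaries and non-returning points) with the symbolic first-return cylinders of length $n+1$; this is built into the construction recalled from \cite{PSZ17}, and in any case only the inequality $S_n\le\#\{\text{first-return words}\}$ is needed.
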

	\begin{proof} See \cite{PSZ17}, Lemma 6.1. \end{proof}
	
	Let $g : \T^2 \to \T^2$ be the conjugacy map so that $f \circ g = g \circ \tilde f$, and let $\Ps = g(\tilde \Ps)$, $P = g(\tilde P)$. Then $\Ps$ is a Markov partition for the almost Anosov system $(\T^2, f)$, and $P$ is a partition element. By continuity of $g$, we may assume the elements of $\Ps$ have arbitrarily small diameter. Further let $\Lambda = g(\tilde \Lambda)$. Then $\Lambda$ has direct hyperbolic product structure with full length stable and unstable curves $\gamma^s(x) = g(\tilde\gamma^s(x))$ and $\gamma^u(x) = g(\tilde\gamma^u(x))$. Then $\Lambda^s_i = g(\tilde\Lambda_i^s)$ are $s$-sets and $\Lambda^u_i = g(\tilde\Lambda^u_i) = f^{\tau_i}(\Lambda_i^s)$, where $\tau_i = \tilde\tau_i$ for each $i$, and $\tau(x) = \tau_i$ whenever $x \in \Lambda_i^s$. 
	
	Let $\tilde B = g^{-1} \left(B_{r_0}(0)\right)$. Assuming $r_0$ is sufficiently small, there is an integer $Q > 0$ and a partition element $\tilde P$ (possibly after refining $\tilde\Ps$) so that $\tilde f^n(x) \not\in \tilde B$ for $n \in \{1, \ldots, Q\}$ whenever $x \in \tilde P$ or $x \in \tilde B^c \cap \tilde f(\tilde B)$. These properties carry over to the Markov partition element $P$ for the almost Anosov system, for the same integer $Q$. 
	
	\begin{theorem}\label{Young-construct}
		The collection of $s$-subsets $\Lambda_i^s = g(\tilde\Lambda_i^s)$ satisfies conditions (Y1) - (Y5), making the almost Anosov diffeomorphism $f : \T^2 \to \T^2$ a Young's diffeomorphism. 
	\end{theorem}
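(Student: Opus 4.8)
The plan is to transport the Young's diffeomorphism structure from the Anosov system $(\T^2, \tilde f)$ to the almost Anosov system $(\T^2, f)$ via the conjugacy $g$, and then to verify the five conditions (Y1)--(Y5) for the induced map $F = f^{\tau_i}$ on $\union_i \Lambda_i^s$. Conditions (Y1) and (Y2) are essentially topological/combinatorial: since $g$ is a homeomorphism conjugating $f$ and $\tilde f$, it carries the stable and unstable families $\tilde\Gamma^{s,u}$ of $\tilde\Lambda$ to continuous families $\Gamma^{s,u}$ for $\Lambda$ (using Theorem \ref{bundle-split} and the local product structure of Remark \ref{localproduct} to see that $g(\tilde\gamma^{s,u})$ are genuine stable/unstable discs), preserves disjointness of the $s$-subsets, preserves the Markov property $f^{\tau_i}(\Lambda_i^s) = g(\tilde f^{\tilde\tau_i}(\tilde\Lambda_i^s))$, and — since $g$ and $g^{-1}$ are absolutely continuous along unstable leaves away from $0$, or more simply since the leaf-volume-zero set $\tilde A^u$ maps to a leaf-volume-zero set — transfers the nondegeneracy of the induced leaf measures. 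For (Y5), the integrability $\sum_i \tau_i \mu_{\gamma^u}(\Lambda_i^s) < \infty$ follows because $\tau_i = \tilde\tau_i$ and the corresponding sum for $\tilde f$ is finite (standard for the first-return tower of an Anosov map over a Markov element), again modulo the bounded distortion of $g$ on one unstable leaf.

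The real content is (Y3) and (Y4): the uniform contraction/expansion estimate with a single constant $a \in (0,1)$, and the bounded-distortion estimates on $J^uF$ with geometric rates $c\kappa^n$ and $c\kappa^k$. For a point $x \in \Lambda_i^s$, the orbit segment $x, f(x), \dots, f^{\tau_i-1}(x)$ either stays in the uniformly hyperbolic region $\T^2 \setminus B_{r_0}(0)$ — where standard Anosov estimates apply — or enters the neighborhood $B_{r_0}(0)$ of the indifferent fixed point. The key mechanism, which I would invoke from the construction preceding the theorem, is that the integer $Q$ and the partition element $\tilde P$ were chosen precisely so that after an orbit enters $B_{r_0}$ it spends at least $Q$ iterates before it can return to $P$ (equivalently to the base $\Lambda$), and the "overshoot" structure controls how the passage near $0$ is bracketed between uniformly hyperbolic excursions. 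The plan for (Y3) is therefore: decompose the return time $\tau_i$ into the (bounded) uniformly hyperbolic pieces and the piece near $0$; on $B_{r_0}$ use the explicit form \eqref{0-coords}--\eqref{simple-phi-psi} of $f$ to see that, although the expansion/contraction rates degenerate to $1$ as one approaches $0$, the \emph{total} expansion along the excursion still dominates because a point with return time $\tau_i = n$ passing near $0$ must have stayed at distance $\gtrsim n^{-1/2}$ from $0$ (by the nondegeneracy polynomial estimate $d(f^k(x), 0)^2 \sim 1/k$), so the accumulated contraction along $\gamma^s$ over the excursion is $\prod (1 - cx_k^2 - dy_k^2) \le \prod(1 - \text{const}/k)^{\pm}$, which combined with the fixed definite contraction from the $Q$ hyperbolic iterates bracketing it yields a uniform $a < 1$; symmetrically for expansion along $\gamma^u$ under $F$.

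For (Y4), the bounded distortion of $\log J^uF$, the strategy is the familiar telescoping argument: write $\log J^uF(F^n(x)) - \log J^uF(F^n(y))$ as a sum over the orbit of the difference of $\log$ of the unstable Jacobian of $f$, and bound each term by the distance between corresponding orbit points times a Hölder/Lipschitz constant for $\log|Df|_{E^u}|$. Away from $0$ this constant is uniformly bounded and the distances contract geometrically by (Y3), giving the $c\kappa^n$ and $c\kappa^k$ bounds directly. Near $0$, $\log|Df|_{E^u}|$ is not Lipschitz — this is exactly the non-Hölder phenomenon flagged in the introduction — so the main obstacle is showing the contributions from the excursion near $0$ are still summable with a geometric rate. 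Here I would use that two points $x, y$ on the same stable leaf (resp. unstable leaf) with $F^k(x), F^k(y)$ in the same $\Lambda_{i_k}^s$ have, during the near-$0$ excursion, \emph{both} coordinates of size $\gtrsim k^{-1/2}$ and mutual distance controlled by the hyperbolicity outside, so that the quotient $J^uF(\cdot)(x)/J^uF(\cdot)(y)$ over the excursion telescopes to a convergent product whose tail is absorbed into $c\kappa^k$; the polynomial form \eqref{simple-phi-psi} makes these sums $\sum_k k^{-1} \cdot (\text{geometrically small distance})$ which converge. I expect this near-$0$ distortion control to be the hardest and most delicate step; everything else is a transfer of known Anosov facts through the conjugacy $g$ together with bookkeeping using the integer $Q$.
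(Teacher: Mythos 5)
Your overall structure — transfer (Y1), (Y2), and (Y5) through the conjugacy $g$, then establish (Y3) and (Y4) by analyzing the excursions near the indifferent fixed point — matches the paper's architecture. There are, however, two places where your plan diverges from the paper's argument in ways worth flagging, and one of them is a genuine gap.

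On the smaller point: for (Y5) the paper does not appeal to any distortion of $g$ or to properties of the Anosov tower. Because the inducing times $\tau_i$ are genuine first-return times to the base $\Lambda$, the integrability $\sum_i \tau_i\,\mu_{\gamma^u}(\Lambda_i^s) < \infty$ is an immediate consequence of Kac's formula. Your version (transfer the finiteness from the $\tilde f$-tower modulo bounded distortion of $g$) is more roundabout and asks for a property of $g$ — absolute continuity on unstable leaves with controlled Jacobian — that the paper never establishes and does not need. Similarly, the paper's verification of (Y2) just notes that the exceptional set on a $\gamma^u$ leaf consists of boundary points of the Markov partition together with points with $\tau = \infty$, both Lebesgue null; you do not need regularity of $g$ along leaves for this either.

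The genuine gap is in (Y4). You correctly identify that the obstacle is the failure of H\"older continuity of $\log|Df|_{E^u}|$ near the origin, and you propose to control the distortion along the excursion into $B_{r_1}(0)$ from scratch, using the asymptotic $d(f^k(x),0)\sim k^{-1/2}$ and the polynomial form \eqref{simple-phi-psi} to sum $\sum_k k^{-1}\cdot(\text{small distance})$. As you yourself note, this is the delicate step, and your sketch leaves the convergence claim unverified: during an excursion of length $n$ near the indifferent point, the separation between orbits on a leaf does \emph{not} contract geometrically (that is exactly the degeneracy of the hyperbolicity), so the sum is not obviously geometric in $n$. The paper sidesteps this entirely by invoking a result already proven in \cite{Hu00} — Lemma 7.4 there, restated in the paper as Lemma \ref{Hu-distortion} — which gives a H\"older-type bound $\bigl|\log\bigl(|Df^n|_{E^u(x)}|/|Df^n|_{E^u(y)}|\bigr)\bigr|\le I\,d(x,y)^\theta$ \emph{uniformly over the entire excursion into $B_{r_1}(0)$}, with $I$ and $\theta$ independent of the excursion length $n$. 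With this lemma in hand, the paper's proof of (Y4) is pure bookkeeping: decompose the return orbit by an itinerary $\{n_0 < n_1 < \cdots\}$ into segments inside and outside $B_{r_1}(0)$, observe that on the outside segments $f$ coincides with the linear map $\tilde f$ so the distortion contribution is exactly zero, bound each inside segment by Lemma \ref{Hu-distortion}, and then use the geometric contraction accumulated during the outside segments (from (Y3)) to sum the contributions into $\frac{I}{1-a^\theta}\,d(x,y)^\theta$, giving (Y4)(a) with $\kappa = a^\theta$. If you intend to prove the distortion bound over a single excursion from first principles rather than citing Hu's Lemma 7.4, you should present a complete argument — the sketch as written does not close the estimate, and that estimate is the technical heart of the theorem.
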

	
	\begin{proof}
		Condition (Y1) follows from the corresponding properties of the Anosov diffeomorphism $\tilde f$ since $g$ is a topological conjugacy. The fact that $\mu_{\gamma^u}\left(\gamma^u \cap \Lambda\right) > 0$ follows from the corresponding property for the $\tilde\gamma^u$ leaves. Suppose $x \in \cl\big(\left(\Lambda \setminus \union_i \Lambda_i^s\right)\cap \gamma^u\big)$. Then either $x$ lies on the boundary of the Markov partition element $P$, or $\tau(x) = \infty$, and since both the Markov partition boundary and the set of $x \in P$ with $\tau(x) = \infty$ are Lebesgue null, we get condition (Y2). Condition (Y5) follows from Kac's formula, since the inducing times are first return times to the base of the tower. 
		
		To prove condition (Y3), let $x \in \Lambda_i^s$ and let $y \in \gamma^s(x)$. Since $P \cap B_{r_1}(0) = \emptyset$, $Df_x|_{E^s(x)}$ uniformly contracts vectors for every $x \in P$, which means $d(f(x), f(y)) \leq ad(x,y)$ for some $a \in (0,1)$. By increasing $r_1$, we may assume $f^n(y) \in B_{r_1}(0)$ if and only if $f^n(x) \in B_{r_1}(0)$ for $n \geq 1$. In Section 4 of \cite{Hu00}, the author proves the existence of local stable and unstable manifolds $W^s_\epsilon(x) = \{y \in \T^2 : d(f^n(x), f^n(y)) \leq \epsilon \: \forall n \geq 1\}$ at every $x \in \T^2$, and in particular for $x \in B_{r_1}(0)$. Letting $k = \min\{n \geq 1 : f^n(x) \in B_{r_1}(0)\}$ and $\epsilon = d(f^k(x), f^k(y))$, as long as the orbits of $x$ and $y$ are inside $B_{r_1}(0)$, the iterates do not exceed a distance of $\epsilon$ from each other. Therefore for all $n \geq k$ with $f^n(x), f^n(y) \in B_{r_1}(0)$, we have
		\[
		d(f^n(x), f^n(y)) \leq d(f^k(x), f^k(y)) \leq a^kd(x,y).
		\]
		Upon exiting $B_{r_1}(0)$, the trajectories of $x$ and $y$ continue to contract, and they do not expand inside of $B_{r_1}(0)$. Therefore $d(f^n(x), f^n(y)) \leq ad(x,y)$ for all $n \geq 1$, and in particular for $n = \tau(x) = \tau(y)$. This proves (Y3)(a), and (Y3)(b) is proven similarly by considering $f^{-1}$ instead of $f$. 
		
		Proving bounded distortion in condition (Y4) requires the following result: 
		
		\begin{lemma}\label{Hu-distortion}
			There exists a constant $I > 0$ and $\theta \in (0,1)$ such that if $\gamma \subset f(B_{r_1}(0)) \setminus B_{r_1}(0)$ is a $W^s$-segment (that is $\gamma$ is a subset of a stable leaf, and is homeomorphic to an open interval in the induced topology), and if $f^i(\gamma) \subset B_{r_1}(0)$ for $i = 1, \ldots, n-1$, then for every $ x, y \in \gamma$, 
			\begin{equation}\label{bounded-distortion}
			\left| \log \frac{\left|Df^{n}|_{E^u(x)}\right|}{\left|Df^n|_{E^u(y)}\right|}\right| \leq Id^u(x,y)^\theta,
			\end{equation}
			where $d^u(x,y)$ is the induced Riemannian distance from $x$ to $y$ in the stable leaf $\gamma$. 
		\end{lemma}
		\begin{proof}
			See Lemma 7.4 in \cite{Hu00}. 
		\end{proof}
		\begin{remark}\label{Hu-distortion-remark}
			This lemma is proven originally for $\gamma$ an interval in an unstable leaf and for $Df^{-n}$ instead of $Df^n$, but the argument is effectively the same by going forwards instead of backwards. Additionally, by H\"older continuity of the stable foliation outside of $B_{r_1}$, by changing $I$ and $\theta$, we can replace $d^u(x,y)$ with $d(x,y)$. 
		\end{remark}
		
		Define the \emph{itinerary} $\mathcal I(x) = \{0 = n_0 < n_1 < \cdots < n_{2\ell+1} = \tau(x)\} \subset \Z$ of a point $x \in \Lambda$, with $\ell = \ell(x)$, so that $f^k(x) \in B_{r_1}(0)$ if and only if $n_{2i-1} < k < n_{2i}$ for $i \geq 1$. Assume $\Lambda$ is small enough so that $\mathcal I(x) = \mathcal I(y)$ whenever $y \in \gamma(x) \subset \Lambda$. When we have $n_{2i} \leq k \leq n_{2i+1}$, $i \geq 0$, $f^k(x)$ is outside of $B_{r_1}(0)$. Since $f$ is a linear Anosov map outside of $B_{r_1}(0)$, for a fixed $n \geq 1$, the determinant of $Df^n_x |_{E^u(x)} : E^u(x) \to E^u(f^n(x))$ is constant for all $x$ with $f^k(x) \not\in B_{r_1}(0)$ for $0 \leq k \leq n$. Therefore, for each $i = 0, \ldots, 2\ell + 1$, 
		\[
		\left| \log\frac{\left| Df^{n_{2i+1} - n_{2i}}|_{E^u(f^{n_{2i}}(x))}\right|}{\left| Df^{n_{2i+1} - n_{2i}}|_{E^u(f^{n_{2i}}(y))}\right|}\right| = 0.
		\]
		Meanwhile, when $n_{2i-1} < k < n_{2i}$, since $f^{2i-1}(x)$ and $f^{2i-1}(y)$ lie in the same stable leaf, and $f^k\left(f^{n_{2i-1}}(x)\right), f^k\left(f^{n_{2i-1}}(y)\right) \in B_{r_1}(0)$ for $k = 1, \ldots, n_{2i} - n_{2i-1} - 1$, then lemma \ref{Hu-distortion} and remark \ref{Hu-distortion-remark} gives us: 
		\[
		\left| \log\frac{\left| Df^{n_{2i} - n_{2i-1}}|_{E^u(f^{n_{2i-1}}(x))}\right|}{\left| Df^{n_{2i} - n_{2i-1}}|_{E^u(f^{n_{2i-1}}(y))}\right|}\right|  \leq Id\left(f^{n_{2i-1}}(x), f^{n_{2i-1}}(y)\right)^\theta \leq Ia^{\theta n_{2i-1}}d(x,y)^\theta. 
		\]
		Therefore since $d(x,y) < 1$ for $x, y \in \Lambda$ with $y \in \gamma^s(x)$, and $0 < a < 1$, we have: 
		\begin{align*}
		\left|\log\frac{J^u F(x)}{J^u F(y)} \right| &= \left| \log \prod_{i=1}^{2\ell+1} \frac{\left|Df^{n_i - n_{i-1}}|_{E^u\left(f^{n_{i-1}}(x)\right)}\right|}{\left|Df^{n_i - n_{i-1}}|_{E^u\left(f^{n_{i-1}}(y)\right)}\right|}\right| \\
		&\leq \sum_{i=1}^{2\ell+1} \left| \log \frac{\left|Df^{n_i - n_{i-1}}|_{E^u\left(f^{n_{i-1}}(x)\right)}\right|}{\left|Df^{n_i - n_{i-1}}|_{E^u\left(f^{n_{i-1}}(y)\right)}\right|} \right| \\
		&\leq I \sum_{i=1}^{\ell} a^{\theta n_{2i-1}} d(x,y)^\theta  \\
		&\leq Id(x,y)^\theta \sum_{n=0}^\infty a^{\theta n} \\
		&= \frac{I}{1-a^\theta} d(x,y)^\theta. 
		\end{align*}
		By condition (Y3), for $n \geq 0$, $x \in F^{-n}\left( \union_i \Lambda_i^s\right)$ and $y \in \gamma^s(x)$, we have: 
		\begin{align*}
		\left| \log\frac{ J^u F(F^n(x))}{J^u F(F^n(y))} \right| \leq \frac I{1-a^\theta} d\left(F^n(x), F^n(y)\right)^\theta \leq \left( \frac{I}{1-a^\theta}\right) a^{\theta n}.
		\end{align*}
		Condition (Y4)(a) now holds with $c = I/(1-a^\theta)$ and $\kappa = a^\theta$. Condition (Y4)(b) can be proven in a similar manner by working backwards instead of forwards. 
	\end{proof}
	
	\section{Proof of Theorem 3.1}
	
	The conjugacy map $g$ preserves topological and combinatorial properties of the Anosov map $\tilde f$, so the number $S_n$ of $s$-sets $\tilde\Lambda_i^s$ with inducing time $\tilde\tau_i = \tau_i$ is the same as the number of $s$-sets $\Lambda_i^s$ for the almost Anosov map $f$ with inducing time $\tau_i$. Therefore, by Lemma 5.1, $S_n \leq e^{hn}$ for some $h < \htop(\tilde f) = \htop(f)$. 
	
	By proposition \ref{PSZ-4.1}(1) and theorem \ref{Young-construct}, there is a unique SRB measure $\mu_1$, which is an equilibrium measure for $\phi_1$. We claim that $h < h_{\mu_1}(f)$. Indeed, for the linear Anosov map $\tilde f$, we have $h_m(\tilde f) = \htop(\tilde f)$, where $m$ is the Lebesgue measure. For $r_1$ sufficiently small, we have 
	\[
	\left| \int_{\T^2} \log \left| Df|_{E^u}\right| \, d\mu_1 - \log\lambda\right| < \epsilon
	\]
	for small $\epsilon > 0$, where $\log\lambda = \sup \log\big|D\tilde f|_{E^u}\big|$ (since $\log \left| Df|_{E^u}\right| = \log\lambda$ outside of $B_{r_1}(0)$). By the Pesin entropy formula, it follows that $\left|h_{\mu_1}(f) - h_{m}(\tilde f)\right|< \epsilon$, and our claim holds. Therefore, by Proposition \ref{PSZ-4.1}(2), there is a $t_0 = t_0(P)$ so that for each $t_0 < t < 1$, there is a unique equilibrium measure $\mu_t \in \mathcal M(f, Y)$, with $Y$ the Young tower. 
	
	Since the Anosov map $\tilde f : \T^2 \to \T^2$ is Bernoulli, every power of $\tilde f$ is ergodic, so the tower for $\tilde f$ satisfies the arithmetic condition. This combinatorial property is preserved under topological conjugacy, so the almost Anosov map $f$ also satisfies the arithmetic condition. 
	
	Note if $x, y \in \Lambda_i^s$, with $y \in \gamma^s(x)$, the distance $d(f^j(x), f^j(y))$ is decreasing as $j \to \infty$. On the other hand, if $y \not\in \gamma^s(x)$, then $d(f^j(x), f^j(y))$ is increasing, but its maximal value is $\leq \diam(P)$ and is attained at $j = \tau_i$. Therefore condition (\ref{4.2 bound}) is satisfied. By Proposition \ref{PSZ-4.1}(3), $\mu_t$ has exponential decay of correlations and satisfies the CLT with respect to a class of functions which contains all H\"older continuous functions on $M$. 
	
	Suppose $\hat P$ is another element of the Markov partition of $(\T^2, f)$ for which $f^n(x) \not\in B_{r_0}(0)$ for $1 \leq n \leq Q$ whenever $x \in \hat P$ or $x \in f(B_{r_0}(0)) \setminus B_{r_0}(0)$. Arguing as above, we find $\hat t_0 < 0$ such that for $\hat t_0 < t < 1$, there exists a unique equilibrium measure $\hat \mu_t$ associated to the geometric $t$-potential among all measures $\mu$ for which $\mu(\hat P) > 0$. Note $\mu_t(U) > 0$ and $\hat \mu_t(\hat U) > 0$ for every pair of open sets $U \subset P$ and $\hat U \subset \hat P$. Since $f$ is topologically transitive, for every such pair of open sets, there is an integer $k > 0$ such that $f^k(U) \cap \hat U \neq \emptyset$. By uniqueness, it follows that $\mu_t = \hat\mu_t$. 
	
	To prove Statement 2 of Theorem \ref{main-theorem}, suppose $\mu$ is an invariant Borel probability measure. Assume $\mu$ is ergodic. By the Margulis-Ruelle inequality, 
	\[
	h_\mu(f) \leq \int_{\T^2} \log\left|Df|_{E^u(x)}\right| \, d\mu(x) = -\int_{\T^2} \phi_1 \, d\mu.
	\]
	Hence $h_\mu(f) + \int \phi_1 \, d\mu \leq 0$. If $\mu$ has only zero Lyapunov exponents (in particular, at almost every $x \in \T^2$ there are no positive Lyapunov exponents), then $\log\left|Df|_{E^u(x)}\right| = 0$ $\mu$-a.e. The only point at which $\log\left|Df|_{E^u(x)}\right| = 0$ is $x=0$, so $\mu = \delta_0$. In this instance, we have $h_{\delta_0}(f) + \int \phi_1 \, d\delta_0 = 0$, so $P(\phi) = 0$ and $\delta_0$ is an equilibrium state for $\phi_1$. 
	
	On the other hand, \cite{Hu00} guarantees the existence of an invariant finite SRB measure $\mu$ for $f$. In particular, $\mu$ is a smooth measure, so by the Pesin entropy formula, $h_\mu(f) + \int \phi_1 \,d\mu = 0$, so $\mu$ is also an equilibrium measure. Any other equilibrium measure with positive Lyapunov exponents also satisfies the entropy formula. By \cite{LY84}, such a measure is also an SRB measure, and by \cite{RH2TU}, this SRB measure is unique.
	
	Finally, to prove Statement 3 of Theorem \ref{main-theorem}, fix $t > 1$, and let $\mu$ be an ergodic measure for $f$. Again, by the Margulis-Ruelle inequality, $$h_\mu(f) \leq t\int \log\left|Df|_{E^u(x)} \right|\,d\mu,$$ with equality if and only if $\int \log\left|Df|_{E^u(x)} \right|\,d\mu = 0$. In particular, we have equality if and only if $\mu$ has zero Lyapunov exponents $\mu$-a.e. As we saw, the only measure satisfying this is $\mu=\delta_0$, so $h_\mu(f) + \int \phi_t \,d\mu \leq 0$, with equality only for $\mu=\delta_0$. 
	
	\section*{Acknowledgments} I would like to thank Penn State University and the Anatole Katok Center for Dynamical Systems and Geometry where this work was done. I also thank my advisor, Y. Pesin, for introducing me to this problem and for valuable input over the course of my investigation into almost Anosov systems. Additionally I would like to thank H. Hu and F. Shahidi for their helpful remarks. 
	

	\medskip
	Received January 2019; revised August 2019.
	\medskip
	
\end{document}